\newtheorem{theorem}{Theorem}
\newtheorem{lemma}{Lemma}
\newtheorem{proposition}{Proposition}
\newtheorem{corollary}{Corollary}
\theoremstyle{remark}
\newtheorem{remark}{Remark}
\theoremstyle{definition}
\DeclareMathOperator{\cont}{cont}
\newcommand{\Disc}{\operatorname{Disc}}
\newcommand{\Jac}{\operatorname{Jac}}
\title{On the Irreducibility of the Cuboid Polynomial $P_{a,u}(t)$}
\author{Valery Asiryan\\[4pt]
{\small \texttt{asiryanvalery@gmail.com}}}
\date{\small October 9, 2025}
\begin{document}
\maketitle


\begin{abstract}
In this paper we consider the even monic degree-8 cuboid polynomial $P_{a,u}(t)$ with coprime integers $a\neq u>0$.
We prove irreducibility over $\mathbb{Z}$ by excluding all degree-8 splittings.
First, any putative $4{+}4$ factorization is shown to force a specific Diophantine constraint which has no integer solutions by a short $2$- and $3$-adic analysis.
Second, we exclude every $2{+}6$ factorization via an exact divisor criterion and a discriminant obstruction. 
Finally, after ruling out $2{+}6$, the patterns $2{+}2{+}4$, $2{+}2{+}2{+}2$, and $3{+}3{+}2$ regroup trivially to $2{+}6$ and are therefore impossible.
Consequently, $P_{a,u}(t)$ admits no nontrivial factorization in $\mathbb{Z}[t]$.
\end{abstract}

{\footnotesize
\paragraph*{Keywords}
Irreducibility over $\mathbb{Z}$; even monic polynomials; cuboid (Euler) polynomial $P_{a,u}(t)$; factorization types $4{+}4$, $2{+}6$, $2{+}2{+}4$, $2{+}2{+}2{+}2$, $3{+}3{+}2$; Diophantine constraints; $p$-adic valuations (2-adic, 3-adic); discriminant obstruction; Gauss's lemma; parity/involution regrouping; elliptic curves.

\paragraph*{MSC 2020}
\textbf{Primary}: 12E05 (Polynomials: irreducibility). \quad
\textbf{Secondary}: 11D72 (Equations in many variables; Diophantine equations), 11S05 (Local and $p$-adic fields), 11Y05 (Factorization; primality).
}


\section{Problem Statement and Notation}
Let \(a,u\in\mathbb{Z}_{>0}\) be coprime and \(a\neq u\).
We consider the even monic polynomial~\cite{Sharipov2011Cuboids,Sharipov2011Note,GuyUPINT}
\[
P_{a,u}(t) = t^{8}+A t^{6}+B t^{4}+C t^{2}+D,
\]
\[
A = 6\Delta,\ \Delta := u^{2}-a^{2}\neq 0,\
B = \Delta^{2}-2a^{2}u^{2},\
C = -a^{2}u^{2}A,\
D = a^{4}u^{4}.
\]

\medskip

We work in \(\mathbb{Z}[t]\). The polynomial \(P_{a,u}\) is even, monic, and primitive: \(\cont(P_{a,u})=1\)~\cite{DummitFoote,ConradGauss,LangAlgebra}.
Standard irreducibility criteria such as Eisenstein’s (including the shifted variant \(t\mapsto t+c\)) generally do not apply uniformly to \(P_{a,u}\); cf.~\cite{ConradEisenstein}.

\begin{theorem}[Goal]
For any coprime \(a,u\in\mathbb{Z}_{>0}\) with \(a\neq u\), the polynomial \(P_{a,u}(t)\) does \emph{not} factor in \(\mathbb{Z}[t]\) as a product of two monic polynomials of degree \(4\) (the case \(4{+}4\)).
\end{theorem}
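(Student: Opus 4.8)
The plan is to pass from the degree‑$8$ even polynomial to the \emph{quartic in $x=t^{2}$}. Writing $m:=a^{2}u^{2}$ one checks directly that $P_{a,u}(t)=Q(t^{2})$ with
\[
Q(x)=x^{4}+6\Delta\,x^{3}+(\Delta^{2}-2m)\,x^{2}-6m\Delta\,x+m^{2},
\]
and, since $(3-2\sqrt2)+(3+2\sqrt2)=6$ and $(3-2\sqrt2)(3+2\sqrt2)=1$, that over $\mathbb{Q}(\sqrt2)$ this quartic splits into the conjugate pair
\[
Q(x)=\bigl(x^{2}+(3-2\sqrt2)\Delta\,x-m\bigr)\bigl(x^{2}+(3+2\sqrt2)\Delta\,x-m\bigr)=:R_{1}(x)R_{2}(x).
\]
The first key step is to show that \emph{any} factorization $P_{a,u}=fg$ into two monic quartics over $\mathbb{Z}$ has both factors even in $t$. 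Indeed $P_{a,u}=P_{1}P_{2}$ over $\mathbb{Q}(\sqrt2)$ with $P_{i}(t)=R_{i}(t^{2})$; the $P_i$ are coprime (a common root would be $t=0$, impossible as $P_{1}(0)=-m\neq0$) and squarefree (the discriminant $(17-12\sqrt2)\Delta^{2}+4m$ of $R_{1}$ is positive). Hence $f=f_{1}f_{2}$ with $f_{i}\mid P_{i}$ monic, and the nontrivial automorphism $\sigma$ of $\mathbb{Q}(\sqrt2)/\mathbb{Q}$, which fixes $f$ and swaps $P_{1},P_{2}$, forces $\sigma(f_{1})=f_{2}$, so $\deg f_{1}=\deg f_{2}=2$; thus $P_{1}$ has a monic quadratic factor $f_{1}$ over $\mathbb{Q}(\sqrt2)$. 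But a monic quadratic $t^{2}+pt+q$ dividing the even quartic $P_1$, with complement $t^{2}-pt+q'$, forces $p(q'-q)=0$; if $p\neq0$ then $q=q'$ and $q^{2}=P_{1}(0)=-m<0$, impossible in $\mathbb{R}\supseteq\mathbb{Q}(\sqrt2)$. So $f_{1}=t^{2}+q$ is even, hence $f=f_{1}\sigma(f_{1})$ is even, and likewise $g$. Consequently $P_{a,u}$ factors $4{+}4$ over $\mathbb{Z}$ \emph{iff} $Q$ factors into two monic quadratics over $\mathbb{Z}$ (by Gauss's lemma the factors of the monic $Q$ may be taken integral).

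Next I would convert ``$Q$ splits into two integral quadratics'' into an explicit Diophantine system. Write $Q=(x^{2}+px+q)(x^{2}+rx+s)$ with $p,q,r,s\in\mathbb{Z}$, so $p+r=6\Delta$, $qs=m^{2}$, $q+s=\Delta^{2}-2m-pr$, $ps+qr=-6m\Delta$. Multiplying the cross‑relation by its companion $pq+rs=(p+r)(q+s)-(ps+qr)$ and using $(ps+qr)(pq+rs)=qs(p^{2}+r^{2})+pr(q^{2}+s^{2})$, one obtains $36m\Delta^{2}(2m+T)=pr(2m-T)(2m+T)$ where $T=q+s$; the branch $2m+T=0$ gives $q=s=-m$ and $p,r=(3\pm2\sqrt2)\Delta\notin\mathbb{Q}$, so it is vacuous. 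Otherwise $36m\Delta^{2}=pr(2m-T)$, and substituting $w:=p-3\Delta$ (so $pr=9\Delta^{2}-w^{2}$, $2m-T=4m+8\Delta^{2}-w^{2}$) collapses everything to
\[
w^{4}-(17\Delta^{2}+4m)\,w^{2}+72\Delta^{4}=0 .
\]
Since the two values of $w^{2}$ have product $72\Delta^{4}$ and sum $17\Delta^{2}+4m$, and one checks $w\mid 6\Delta^{2}$ automatically, this is equivalent to the existence of positive integers $w,\nu$ with
\[
w\nu=6\Delta^{2},\qquad w^{2}+2\nu^{2}=17\Delta^{2}+4a^{2}u^{2},
\]
equivalently $(2w-3\nu)(3w-4\nu)=24a^{2}u^{2}$ together with $w\nu=6\Delta^{2}$. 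The theorem is thereby reduced to the insolubility of this system.

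The final step — the main obstacle — is to kill the system $w\nu=6\Delta^{2}$, $w^{2}+2\nu^{2}=17\Delta^{2}+4a^{2}u^{2}$ by a $2$‑ and $3$‑adic analysis. Two facts drive it: $\gcd(a,u)=1$ gives $\gcd(\Delta,au)=1$, so $\Delta$ and $au$ share no prime; and the Pythagorean identity $\Delta^{2}+4a^{2}u^{2}=(a^{2}+u^{2})^{2}$ rewrites the second equation as
\[
(3w-4\nu)^{2}+2\nu^{2}=\bigl(3(a^{2}+u^{2})\bigr)^{2}.
\]
One then splits on the parity of $a,u$ (equivalently: $\Delta$ odd versus $8\mid\Delta$) and on whether $3\mid au$ or $3\mid\Delta$. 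In each case the $2$‑adic valuations of $w,\nu$ are pinned down by comparing $v_{2}$ of the two sides of $w^{2}+2\nu^{2}=17\Delta^{2}+4a^{2}u^{2}$ — the summands $w^{2}$ and $2\nu^{2}$ have valuations of opposite parity, so they cannot cancel — while reduction modulo $9$ eliminates the branch ``$3\mid w$'' when $3\mid\Delta$, because the residues of $2\nu^{2}$ and of $a^{2}u^{2}$ modulo $9$ are disjoint ($2\cdot\{1,4,7\}=\{2,5,8\}$ versus $\{1,4,7\}$). Working the surviving branches modulo $8$, $16$ and $9$, using that $2$ is a quadratic residue only modulo primes $\equiv\pm1\pmod8$ and that squares modulo $9$ lie in $\{0,1,4,7\}$, one reaches a contradiction in every case. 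I expect this finite but delicate bookkeeping across the parity‑and‑$3$ cases to be the genuinely hard part; everything before it is a clean algebraic reduction.
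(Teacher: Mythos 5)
Your reduction is correct and in places slicker than the paper's: the factorization $Q(x)=\bigl(x^{2}+(3-2\sqrt2)\Delta x-m\bigr)\bigl(x^{2}+(3+2\sqrt2)\Delta x-m\bigr)$ over $\mathbb{Q}(\sqrt2)$, together with the Galois/coprimality argument, forces both quartic factors to be even and thereby disposes of the conjugate-pair case that the paper eliminates by a separate explicit computation (its Theorem on case (C)). Your quartic $w^{4}-(17\Delta^{2}+4m)w^{2}+72\Delta^{4}=0$ with $w=p-3\Delta$ is literally the paper's equation $(\star)$, $(X^{2}-8\Delta^{2})(X^{2}-9\Delta^{2})=4a^{2}u^{2}X^{2}$, and your reformulation $w\nu=6\Delta^{2}$, $w^{2}+2\nu^{2}=17\Delta^{2}+4m$ even hands you the useful global divisibility $w\mid 6\Delta^{2}$ for free. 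Up to this point everything checks out.

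The gap is the endgame, which you explicitly leave as a sketch, and the sketch as described cannot succeed. Your proposed tools are congruences modulo $8$, $16$, $9$ and the fact that $\bigl(\tfrac{2}{p}\bigr)=1$ iff $p\equiv\pm1\pmod 8$. Consider an odd prime $p\ge 5$ with $p\mid\Delta$, $d:=\nu_p(\Delta)$. Since $p\nmid au$, comparing $p$-adic valuations in $w^{2}+2\nu^{2}=17\Delta^{2}+4a^{2}u^{2}$ and $w\nu=6\Delta^{2}$ forces exactly one of $w,\nu$ to carry the full power $p^{2d}$. In the branch $\nu_p(w)=2d$, $\nu_p(\nu)=0$ one gets $2\nu^{2}\equiv 4a^{2}u^{2}\pmod{p^{2d}}$, i.e.\ only the condition that $2$ be a quadratic residue mod $p$; for $p\equiv\pm1\pmod 8$ this is no obstruction, and the moduli $8$, $16$, $9$ do not see $p$ at all. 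This is precisely the residual configuration the paper itself could not close by elementary congruences: after refining to $p\equiv 1\pmod 8$ it is forced onto the system $m^{2}=p^{2d}\xi^{2}-8\Delta_1^{2}$, $n^{2}=p^{2d}\xi^{2}-9\Delta_1^{2}$, and the contradiction is obtained only by showing that the Jacobian $E_0:\,y^{2}=x(x+1)(x+9)$ (minimal model $48\mathrm{a}3$) has $E_0(\mathbb{Q})\simeq\mathbb{Z}/2\oplus\mathbb{Z}/4$ of rank $0$ and that no point of $E_0(\mathbb{Q})$ other than $(0,0)$ has square $x$-coordinate. So what you call "finite but delicate bookkeeping" conceals a case that is not finite bookkeeping at all but requires a genuinely global input (a Mordell--Weil rank computation, or an equivalent descent on the intersection of two quadrics). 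Until you either prove $\gcd(w,\Delta)=1$ by some such argument or otherwise dispose of the primes $p\equiv\pm1\pmod 8$ dividing $\gcd(w,\Delta)$, the proof is incomplete. (The remaining cases, where $\gcd(w,\Delta)=1$ and hence $w\mid 6$, do look finishable by your methods.)
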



\section{Normal Form of a \(4{+}4\) Factorization and the Necessary Condition \texorpdfstring{\((\star)\)}{(*)}}

\begin{lemma}[Gauss + involution]\label{lem:types}
If \(P_{a,u}=FG\) with monic \(F,G\in\mathbb{Z}[t]\) and \(\deg F=\deg G=4\), then, after swapping the factors if necessary, one of the following holds:
\begin{itemize}[nosep]
\item[(E)] \emph{both factors are even:} \(F=t^{4}+p t^{2}+q\), \(G=t^{4}+r t^{2}+s\) (\(p,q,r,s\in\mathbb{Z}\));
\item[(C)] \emph{a conjugate pair:} \(G(t)=F(-t)\), where \(F=t^{4}+\alpha t^{3}+\beta t^{2}+\gamma t+\delta\).
\end{itemize}
\end{lemma}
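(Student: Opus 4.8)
The plan is to lean on the single structural feature of $P_{a,u}$ that is relevant here — that it is an \emph{even} polynomial — together with Gauss's lemma. Since $P_{a,u}(-t)=P_{a,u}(t)$, the identity $P_{a,u}=FG$ reflects to $P_{a,u}=F(-t)\,G(-t)$, and because $\deg F=\deg G=4$ is even, $F(-t)$ and $G(-t)$ are again \emph{monic} of degree $4$. Thus $\{F(-t),G(-t)\}$ is a second $4{+}4$ factorization over $\mathbb{Z}$, and I would compare it with $\{F,G\}$ through the monic greatest common divisor $d(t):=\gcd\!\bigl(F(t),F(-t)\bigr)$ computed in $\mathbb{Q}[t]$.

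First I would pin down $\deg d$. The ideal $\bigl(F(t),F(-t)\bigr)\subseteq\mathbb{Q}[t]$ is carried to itself by $t\mapsto-t$, which merely permutes its two generators, so $d(-t)=c\,d(t)$ for a unit $c\in\mathbb{Q}^{\times}$; comparing leading coefficients forces $c=(-1)^{\deg d}$. If $\deg d$ were odd then $d$ would be an odd polynomial, hence divisible by $t$, and $t\mid d\mid P_{a,u}$ would contradict $P_{a,u}(0)=D=a^{4}u^{4}\neq0$. So $\deg d\in\{0,2,4\}$. If $\deg d=4$, then $d$ equals both monic degree-$4$ polynomials $F$ and $F(-t)$, so $F(-t)=F(t)$, i.e.\ $F$ is even; then $G=P_{a,u}/F$, a quotient of even polynomials, is even too, which is case~(E). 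If $\deg d=0$, then $F(t)$ and $F(-t)$ are coprime and both divide $P_{a,u}$, so their product — monic of degree $8$ — equals $P_{a,u}$, whence $G=F(-t)$, which is case~(C) (the odd-degree coefficients of $F$ cannot all vanish here, else $\deg d$ would be $4$).

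The only case not settled by a parity count is $\deg d=2$, and this is the crux. Here $d=t^{2}+\lambda$ is an even quadratic factor of $F$, hence of $P_{a,u}$; by Gauss's lemma $\lambda\in\mathbb{Z}$, and writing $P_{a,u}(t)=Q(t^{2})$ with $Q(x)=x^{4}+Ax^{3}+Bx^{2}+Cx+D$, polynomial division in $x$ shows $t^{2}+\lambda\mid Q(t^{2})$ forces $(x+\lambda)\mid Q(x)$, so $-\lambda$ is an integer root of $Q$. I would then use the completion-of-square identity
\[
Q(x)=\bigl(x^{2}+3\Delta x-a^{2}u^{2}\bigr)^{2}-8\Delta^{2}x^{2},\qquad\text{i.e.}\qquad P_{a,u}(t)=\bigl(t^{4}+3\Delta t^{2}-a^{2}u^{2}\bigr)^{2}-8\Delta^{2}t^{4},
\]
verified by expanding. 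An integer root $x_{0}$ of $Q$ would give $\bigl(x_{0}^{2}+3\Delta x_{0}-a^{2}u^{2}\bigr)^{2}=8(\Delta x_{0})^{2}$, so $8(\Delta x_{0})^{2}$ is a perfect square; but its $2$-adic valuation is $3+2v_{2}(\Delta x_{0})$, which is odd unless $\Delta x_{0}=0$. Since $\Delta\neq0$ this forces $x_{0}=0$, contradicting $Q(0)=D=a^{4}u^{4}\neq0$. Hence $\deg d=2$ is impossible, and $\deg d\in\{0,4\}$ yields exactly cases~(C) and~(E).

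The main obstacle is thus concentrated entirely in the $\deg d=2$ case, and within it in spotting the identity $P_{a,u}(t)=\bigl(t^{4}+3\Delta t^{2}-a^{2}u^{2}\bigr)^{2}-8\Delta^{2}t^{4}$ and the bookkeeping reducing $t^{2}+\lambda\mid P_{a,u}$ to a root of $Q$; every other step is a two-line statement about parities of degrees or a degree count. (If one prefers to avoid $Q$, the same case writes $P_{a,u}=(t^{2}+\lambda)(t^{2}+\mu)\,e(t)e(-t)$ with $e$ monic quadratic — visibly a product of two even quartics, already of type~(E) — so the dichotomy can also be recovered after regrouping; but the valuation argument above disposes of $\deg d=2$ outright.)
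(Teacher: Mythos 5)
Your proof is correct, and it is genuinely more complete than the paper's own argument, which is only a one‑line sketch: ``either both factors are $\tau$‑invariant or $\tau$ swaps them.'' That dichotomy is \emph{not} a formal consequence of evenness when the quartic factors are reducible --- for a generic even octic one can have $F=f_1f_3$ with $f_1$ even quadratic and $f_3(-t)\mid G$, so that $F(-t)$ equals neither $F$ nor $G$; e.g.\ $(t^2+1)(t^2+t+1)\cdot(t^2+2)(t^2-t+1)$ realizes this ``mixed'' type. Your computation of $d=\gcd\bigl(F(t),F(-t)\bigr)$ surfaces exactly this third possibility as the case $\deg d=2$, and you dispose of it by showing $P_{a,u}$ has no even quadratic factor, via the (correct) identity $Q(x)=\bigl(x^{2}+3\Delta x-a^{2}u^{2}\bigr)^{2}-8\Delta^{2}x^{2}$ and the odd $2$-adic valuation of $8(\Delta x_0)^2$. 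This is arithmetically the same obstruction the paper only deploys much later, in Section~\ref{sec:exclude-26} (the discriminant $\Disc_{A_0}=32\,\Delta^{2}q^{2}$ having odd $\nu_2$), so in effect you have imported the even-quadratic-divisor exclusion into the proof of the lemma. What this buys: the lemma becomes unconditionally true \emph{as stated} and self-contained, whereas the paper's sketch either leaves the mixed case unaddressed or must lean on the later $2{+}6$ analysis (which is logically harmless, since that analysis does not depend on the $4{+}4$ case of type (E)/(C), but is not said). The remaining steps --- $d(-t)=(-1)^{\deg d}d(t)$, exclusion of odd $\deg d$ via $P(0)=D\neq0$, $\deg d=4\Rightarrow$ (E), $\deg d=0\Rightarrow$ (C), and integrality of $d$ by Gauss's lemma --- are all sound.
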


\begin{proof}[Idea]
Primitivity and Gauss's lemma yield primitivity and monicity of the factors~\cite{DummitFoote,ConradGauss,LangAlgebra}. The involution \(\tau:t\mapsto -t\) fixes \(P_{a,u}\); either both factors are invariant (even), or \(\tau\) swaps the factors (a conjugate pair).
\end{proof}


\subsection*{Detailed derivation in case (E)}

Let \(F=t^{4}+p t^{2}+q\), \(G=t^{4}+r t^{2}+s\). From \(FG=P_{a,u}\) we obtain the system
\begin{align}
p+r&=A, \label{E1}\\
pr+q+s&=B, \label{E2}\\
ps+rq&=C, \label{E3}\\
qs&=D. \label{E4}
\end{align}
From \eqref{E1} we have \(r=A-p\). Introduce
\[
M:=B+p^{2}-Ap.
\]
Then \eqref{E2} and \eqref{E4} rewrite as
\begin{equation}\label{Eqs}
q+s=M,\qquad qs=D.
\end{equation}
Thus \(q,s\) are integer roots of the quadratic equation \(X^{2}-MX+D=0\). Denote (the discriminant of this quadratic)
\[
T^{2}:=M^{2}-4D~\cite{DummitFoote,LangAlgebra}.
\]
Then
\begin{equation}\label{qsT}
q=\frac{M+\sigma T}{2},\quad s=\frac{M-\sigma T}{2},\qquad \sigma\in\{\pm1\}.
\end{equation}
Substitute \eqref{qsT} into \eqref{E3}. The left-hand side of \eqref{E3} equals
\[
ps+rq=p\frac{M-\sigma T}{2}+(A-p)\frac{M+\sigma T}{2}
=\frac{AM+\sigma T(A-2p)}{2}.
\]
Hence from \eqref{E3} we get
\begin{equation}\label{keyRel0}
\frac{AM+\sigma T(A-2p)}{2}=C
\quad\Longleftrightarrow\quad
\sigma\,T\,(A-2p)=2C-AM.
\end{equation}
Set
\[
X:=p-3\Delta\qquad(\text{that is } p=X+3\Delta,\ A=6\Delta).
\]
What follows is a direct computation.

\smallskip
\noindent\underline{Computing \(M\).}
\[
\begin{aligned}
M&=B+p^{2}-Ap
=(\Delta^{2}-2a^{2}u^{2})+(X+3\Delta)^{2}-6\Delta(X+3\Delta)\\
&=(\Delta^{2}-2a^{2}u^{2})+\bigl(X^{2}+6\Delta X+9\Delta^{2}\bigr)-6\Delta X-18\Delta^{2}\\
&=X^{2}-8\Delta^{2}-2a^{2}u^{2}.
\end{aligned}
\]

\smallskip
\noindent\underline{Computing \(2C-AM\).}
Since \(C=-a^{2}u^{2}A=-6\Delta\,a^{2}u^{2}\), we have
\[
2C=-12\Delta\,a^{2}u^{2},\qquad
AM=6\Delta\,(X^{2}-8\Delta^{2}-2a^{2}u^{2}).
\]
Therefore,
\[
2C-AM
=-12\Delta a^{2}u^{2}-6\Delta(X^{2}-8\Delta^{2}-2a^{2}u^{2})
=-6\Delta X^{2}+48\Delta^{3}.
\]
Thus \eqref{keyRel0} becomes
\[
\begin{aligned}
\sigma\,T\,(A-2p)
&=\sigma\,T\,(6\Delta-2X-6\Delta)
=-2\sigma X T\\
&=2C-AM
=-6\Delta X^{2}+48\Delta^{3}.
\end{aligned}
\]
Divide by \(-2\) to obtain the fundamental relation
\begin{equation}\label{TXrel}
\sigma\,T\,X=3\Delta\,(X^{2}-8\Delta^{2}).
\end{equation}

\smallskip
\noindent\underline{Computing \(T^{2}\).}
By definition,
\[
\begin{aligned}
T^{2}&=M^{2}-4D
=\bigl(X^{2}-8\Delta^{2}-2a^{2}u^{2}\bigr)^{2}-4a^{4}u^{4}\\
&=(X^{2}-8\Delta^{2})^{2}-4a^{2}u^{2}(X^{2}-8\Delta^{2})\\
&=(X^{2}-8\Delta^{2})\bigl(X^{2}-8\Delta^{2}-4a^{2}u^{2}\bigr).
\end{aligned}
\]

\smallskip
\noindent\underline{Deriving the starred equation.}
Square \eqref{TXrel} and substitute the expression for \(T^{2}\):
\[
T^{2}X^{2}=9\Delta^{2}\,(X^{2}-8\Delta^{2})^{2}.
\]
Since \(X^{2}\neq 8\Delta^{2}\) (see below), we can cancel \((X^{2}-8\Delta^{2})\) and obtain
\[
\bigl(X^{2}-8\Delta^{2}-4a^{2}u^{2}\bigr)X^{2}
=9\Delta^{2}\,(X^{2}-8\Delta^{2}).
\]
Moving everything to the left and grouping, we arrive at the Diophantine equation
\begin{equation}\label{eq:star}
\boxed{(X^{2}-8\Delta^{2})(X^{2}-9\Delta^{2})=4\,a^{2}u^{2}\,X^{2}\tag{$\star$}}
\end{equation}
(see the remark below on the legitimacy of cancellation).

\begin{remark}[Legitimacy of cancellation and a consequence]
If \(X^{2}=8\Delta^{2}\), then comparing the 2-adic valuations yields
\(
2\,\nu_{2}(X)=3+2\,\nu_{2}(\Delta),
\)
which is impossible (the left-hand side is even, the right-hand side is odd). Hence for \(\Delta\neq 0\) the equality \(X^{2}=8\Delta^{2}\) has no integer solutions, and cancellation by the factor \(X^{2}-8\Delta^{2}\) is valid~\cite{SerreCourse,KoblitzPadic,IrelandRosen}. Consequently, \eqref{E1}–\eqref{E4} \emph{imply} \eqref{eq:star}. 
The converse, in general, is not claimed: in addition one needs that 
$T^2=M^2-4D$ be a perfect square and $q=\tfrac{M\pm T}{2}\in\mathbb{Z}$.
\end{remark}


\subsection*{Case (C): conjugate pair}
Assume
\[
F(t)=t^{4}+\alpha t^{3}+\beta t^{2}+\gamma t+\delta,\qquad
G(t)=F(-t),
\]
so that \(F(t)F(-t)=P_{a,u}(t)\).
Equating coefficients gives the system
\begin{align}
2\beta-\alpha^{2}&=A=6\Delta, \label{C1}\\
\beta^{2}+2\delta-2\alpha\gamma&=B=\Delta^{2}-2A_{0}, \label{C2}\\
2\beta\delta-\gamma^{2}&=C=-6\Delta A_{0}, \label{C3}\\
\delta^{2}&=D=A_{0}^{2}, \label{C4}
\end{align}
where \(\Delta=u^{2}-a^{2}\neq 0\) and \(A_{0}=a^{2}u^{2}=(au)^{2}\).

\paragraph{Step 1: the sign of \(\delta\) is forced.}
From \eqref{C4} we have \(\delta=\pm A_{0}\).
If \(\delta=-A_{0}\), then \eqref{C3} becomes
\[
-2\beta A_{0}-\gamma^{2}=-6\Delta A_{0}\quad\Longrightarrow\quad
\gamma^{2}=A_{0}\,(6\Delta-2\beta).
\]
Using \eqref{C1}, \(2\beta=\alpha^{2}+6\Delta\), we get \(\gamma^{2}=-A_{0}\alpha^{2}\).
Hence \(\gamma=\alpha=0\). Then \eqref{C1} yields \(\beta=3\Delta\), and \eqref{C2} gives
\(9\Delta^{2}+2(-A_{0})=\Delta^{2}-2A_{0}\), i.e. \(8\Delta^{2}=0\), which contradicts \(\Delta\ne 0\).
Therefore necessarily
\[
\boxed{\ \delta=+A_{0}\ }.
\]

\paragraph{Step 2: a convenient reparametrization.}
Put \(m:=au\), so \(A_{0}=m^{2}\).
With \(\delta=A_{0}=m^{2}\), \eqref{C3} implies
\[
\gamma^{2}=m^{2}(2\beta+6\Delta),
\]
hence \(m\mid\gamma\). Write \(\gamma=m\kappa\) with \(\kappa\in\mathbb{Z}\).
Using \eqref{C1} (i.e. \(2\beta=\alpha^{2}+6\Delta\)) we obtain
\begin{equation}\label{eq:kappa-alpha}
\boxed{\ \kappa^{2}=\alpha^{2}+12\Delta\ }.
\end{equation}
Introduce
\[
s:=\kappa+\alpha,\qquad t:=\kappa-\alpha\qquad(\text{so }s,t\in\mathbb{Z},\ \ s+t=2\kappa,\ s-t=2\alpha).
\]
Then from \eqref{eq:kappa-alpha}
\[
st=\kappa^{2}-\alpha^{2}=12\Delta.\tag{$\dagger$}
\]
In terms of \(s,t\) one readily checks that
\begin{equation}\label{eq:beta-agg}
\begin{split}
\beta &= \frac{\alpha^{2}+6\Delta}{2}
=\frac{(s-t)^{2}}{8}+\frac{st}{4}
=\boxed{\ \frac{s^{2}+t^{2}}{8}\ },\\[4pt]
\alpha\gamma &= m\alpha\kappa
= m\,\frac{(s+t)(s-t)}{4}
=\boxed{\ m\,\frac{s^{2}-t^{2}}{4}\ }.
\end{split}
\end{equation}

\paragraph{Step 3: eliminating \(\alpha,\beta,\gamma\) from \eqref{C2}.}
Substitute \eqref{eq:beta-agg} and \(\delta=m^{2}\) into \eqref{C2}:
\[
\Bigl(\frac{s^{2}+t^{2}}{8}\Bigr)^{2}
+2m^{2}
-2\cdot m\,\frac{s^{2}-t^{2}}{4}
=\Delta^{2}-2m^{2}.
\]
Multiply by \(576=\operatorname{lcm}(64,2,144)\) and use \((\dagger)\), i.e. \(\Delta^{2}=(st)^{2}/144\), to clear denominators:
\[
9(s^{2}+t^{2})^{2}-288m(s^{2}-t^{2})+2304m^{2}=4s^{2}t^{2}.
\]
Rearranging,
\begin{equation}\label{eq:main-square}
9(s^{2}+t^{2})^{2}-288m(s^{2}-t^{2})+2304m^{2}-4s^{2}t^{2}=0.
\end{equation}
Set \(U:=s^{2}\), \(V:=t^{2}\) (nonnegative integers). Then \eqref{eq:main-square} becomes
\[
9U^{2}+14UV+9V^{2}-288mU+288mV+2304m^{2}=0.
\]
Completing the square gives an identity
\[
(3U-3V-48m)^{2}+32\,UV=0.
\]
Therefore both terms vanish:
\[
UV=0\quad\text{and}\quad 3U-3V-48m=0.
\]
The first equality \(UV=0\) means \(s\,t=0\), hence by \((\dagger)\) we get \(\Delta=0\), which contradicts our standing assumption \(\Delta\ne 0\).

\paragraph{Conclusion.}
Thus the system \eqref{C1}–\eqref{C4} has no integer solutions when \(\Delta\ne 0\).
Equivalently, the factorization \(P_{a,u}(t)=F(t)F(-t)\) with a monic quartic \(F\in\mathbb{Z}[t]\) is impossible.

\begin{theorem}[Case (C) is impossible]\label{thm:caseC-impossible}
For coprime integers \(a\ne u>0\) (so \(\Delta=u^{2}-a^{2}\ne 0\)), there are no integers
\(\alpha,\beta,\gamma,\delta\) with \(\delta^{2}=A_{0}^{2}\) such that
\[
P_{a,u}(t)=\bigl(t^{4}+\alpha t^{3}+\beta t^{2}+\gamma t+\delta\bigr)\,\bigl(t^{4}-\alpha t^{3}+\beta t^{2}-\gamma t+\delta\bigr).
\]
In particular, no \(4{+}4\) factorization of type \emph{(C)} (conjugate pair) exists.
\end{theorem}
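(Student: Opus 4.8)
The plan is to carry out the elimination from the subsection above as a self-contained argument in three stages: first pin down the sign of $\delta$; then pass to coordinates that collapse the system \eqref{C1}--\eqref{C4} to a single quadratic relation in two squares; and finally show that relation forces $\Delta=0$, contrary to hypothesis. Stage~1 is quick: \eqref{C4} gives $\delta=\pm A_0$, and if $\delta=-A_0$ then \eqref{C3} combined with \eqref{C1} (i.e. $2\beta=\alpha^2+6\Delta$) yields $\gamma^2=-A_0\alpha^2\le 0$, so $\gamma=\alpha=0$; then \eqref{C1} forces $\beta=3\Delta$ and \eqref{C2} degenerates to $8\Delta^2=0$, impossible. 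Hence $\delta=+A_0=m^2$ with $m:=au$.

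For Stage~2 I would use $\delta=m^2$ in \eqref{C3} to get $\gamma^2=m^2(2\beta+6\Delta)$, so $m\mid\gamma$; write $\gamma=m\kappa$ and, via $2\beta=\alpha^2+6\Delta$, obtain the clean identity $\kappa^2=\alpha^2+12\Delta$. The decisive move is the change of variables $s:=\kappa+\alpha$, $t:=\kappa-\alpha$, which turns this into the product form $st=12\Delta$; the point is that now $\Delta^2=(st)^2/144$ becomes polynomial in $s,t$, while $\beta=(s^2+t^2)/8$ and $\alpha\gamma=m(s^2-t^2)/4$ are as well.

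In Stage~3 I would substitute these expressions together with $\delta=m^2$ into the last remaining equation \eqref{C2}, clear denominators (multiplying through by $576$), and eliminate $\Delta^2$ using $st=12\Delta$. Setting $U:=s^2\ge 0$ and $V:=t^2\ge 0$, this produces
\[
9U^2+14UV+9V^2-288mU+288mV+2304m^2=0,
\]
which I would then rewrite as the sum of squares
\[
(3U-3V-48m)^2+32UV=0 .
\]
Since both summands are nonnegative, each vanishes; in particular $UV=0$, so $st=0$, and then $st=12\Delta$ gives $\Delta=0$, contradicting $\Delta\ne 0$. Hence no integers $\alpha,\beta,\gamma,\delta$ with $\delta^2=A_0^2$ realize the claimed factorization, which is exactly the assertion of the theorem.

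The main obstacle I anticipate is the algebraic engineering in Stage~2: one must find substitutions that simultaneously linearize the constraint $\kappa^2-\alpha^2=12\Delta$ and keep $\beta$ and $\alpha\gamma$ polynomial in the new variables, so that \eqref{C2} turns into a quadratic form whose sum-of-squares shape is visible. Everything downstream — the completion of the square and the nonnegativity argument in Stage~3 — is then routine; the only point requiring care is to verify that the denominator clearing and the final rewriting are exact (no lost solutions, no stray sign), since the whole conclusion rests on the single identity $(3U-3V-48m)^2+32UV=0$.
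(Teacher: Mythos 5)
Your proposal is correct and follows essentially the same route as the paper's own argument: forcing $\delta=+A_0$, the substitution $\gamma=m\kappa$ with $\kappa^2=\alpha^2+12\Delta$, the change of variables $s=\kappa+\alpha$, $t=\kappa-\alpha$ with $st=12\Delta$, and the final identity $(3U-3V-48m)^2+32UV=0$ forcing $\Delta=0$. All the key identities you cite check out, so nothing further is needed.
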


\begin{remark}
This argument is independent of the analysis in the even–even case \emph{(E)} and does not use any auxiliary factorization of the elimination polynomial.
It relies only on \eqref{C1}–\eqref{C4}, the sign determination \(\delta=A_{0}\), the reparametrization \((s,t)\) given by \(\kappa^{2}=\alpha^{2}+12\Delta\), and the elementary identity
\[
(3s^{2}-3t^{2}-48m)^{2}+32s^{2}t^{2}=0,
\]
which forces \(st=0\), hence \(\Delta=0\), a contradiction.
\end{remark}


\begin{theorem}[Necessary condition for \(4{+}4\)]\label{thm:equiv}
Let \(\Delta=u^2-a^2\ne0\).
If \(P_{a,u}(t)\) factors in \(\mathbb{Z}[t]\) as a product of two monic quartics, then there exists \(X\in\mathbb{Z}\) satisfying \eqref{eq:star}.
\end{theorem}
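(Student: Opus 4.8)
The plan is to feed the structural dichotomy of \Cref{lem:types} into the two case analyses already completed above. First I would apply \Cref{lem:types}: since $P_{a,u}$ is even, monic, and primitive, Gauss's lemma makes the two degree-$4$ factors monic and integral, and the involution $\tau:t\mapsto -t$ either fixes each factor — the even--even case (E), $F=t^4+pt^2+q$, $G=t^4+rt^2+s$ with $p,q,r,s\in\mathbb{Z}$ — or interchanges them — the conjugate-pair case (C), $G(t)=F(-t)$. Case (C) is already excluded for $\Delta\ne0$ by \Cref{thm:caseC-impossible}, so the factorization must be of type (E), and it remains only to extract \eqref{eq:star} from that case.

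In case (E) I would replay the elimination performed in the ``Detailed derivation'' subsection. Matching coefficients of $FG=P_{a,u}$ gives \eqref{E1}--\eqref{E4}; substituting $r=A-p$ and setting $M:=B+p^{2}-Ap$ converts \eqref{E2},\eqref{E4} into \eqref{Eqs}, so $q,s$ are the integer roots of $Y^{2}-MY+D$ and $T^{2}:=M^{2}-4D$ is a perfect square with $q,s=(M\pm\sigma T)/2$ for some $\sigma\in\{\pm1\}$ as in \eqref{qsT}. Plugging \eqref{qsT} into \eqref{E3} produces \eqref{keyRel0}. Now introduce the shift $X:=p-3\Delta$ (so $A=6\Delta$, $p=X+3\Delta$); the three short computations recorded above give $M=X^{2}-8\Delta^{2}-2a^{2}u^{2}$, then $2C-AM=-6\Delta X^{2}+48\Delta^{3}$, hence the fundamental relation \eqref{TXrel}, namely $\sigma TX=3\Delta(X^{2}-8\Delta^{2})$, and finally $T^{2}=(X^{2}-8\Delta^{2})(X^{2}-8\Delta^{2}-4a^{2}u^{2})$.

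Squaring \eqref{TXrel} and substituting this factored form of $T^{2}$ yields $(X^{2}-8\Delta^{2})(X^{2}-8\Delta^{2}-4a^{2}u^{2})X^{2}=9\Delta^{2}(X^{2}-8\Delta^{2})^{2}$. The one point requiring care — and essentially the only obstacle — is the cancellation of the common factor $X^{2}-8\Delta^{2}$; this is licensed by the Remark, since $X^{2}=8\Delta^{2}$ would force $2\nu_{2}(X)=3+2\nu_{2}(\Delta)$, an even integer equal to an odd one, impossible when $\Delta\ne0$. After cancelling and regrouping the $X^{4}$, $\Delta^{2}X^{2}$, $\Delta^{4}$ terms one lands precisely on $(X^{2}-8\Delta^{2})(X^{2}-9\Delta^{2})=4a^{2}u^{2}X^{2}$, i.e.\ \eqref{eq:star}, with $X=p-3\Delta\in\mathbb{Z}$. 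This proves the existence claim. Beyond this bookkeeping there is nothing hard here: the substantive inputs — the exclusion of case (C) and the $2$-adic non-degeneracy of $X^{2}=8\Delta^{2}$ — are both already in hand.
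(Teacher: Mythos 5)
Your proposal is correct and follows the paper's own route exactly: invoke Lemma~\ref{lem:types} to reduce to types (E) or (C), discharge (C) via Theorem~\ref{thm:caseC-impossible}, and then rerun the elimination from the ``Detailed derivation'' subsection (including the $2$-adic justification that $X^{2}\neq 8\Delta^{2}$ licenses the cancellation) to arrive at \eqref{eq:star}. Nothing to add.
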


\begin{proof}
By Lemma~\ref{lem:types} any \(4{+}4\) factorization is of type \textup{(E)} or \textup{(C)}.
By Theorem~\ref{thm:caseC-impossible} case \textup{(C)} is excluded; hence we are in \textup{(E)}:
\(F=t^{4}+p t^{2}+q\), \(G=t^{4}+r t^{2}+s\).
As shown in the derivation of \eqref{eq:star}, setting \(X:=p-3\Delta\) and eliminating \(q,s\) via \eqref{E1}–\eqref{E4} yields precisely \eqref{eq:star}.
\end{proof}

\section{Key Lemma: \texorpdfstring{\(\gcd(X,\Delta)=1\)}{gcd(X,\Delta)=1}}

\begin{lemma}\label{lem:gcd}
If \(X\in\mathbb{Z}\) satisfies \eqref{eq:star}, then \(\gcd(X,\Delta)=1\).
\end{lemma}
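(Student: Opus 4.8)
The goal is to show that no prime divides $g:=\gcd(X,\Delta)$. First note $\gcd(\Delta,au)=1$: a common prime of $\Delta=u^{2}-a^{2}$ and $a$ divides $u^{2}$, hence $u$, contradicting $\gcd(a,u)=1$ — and symmetrically for $u$. So any $p\mid g$ has $p\nmid a$, $p\nmid u$, in particular $p\nmid 2au$ for $p$ odd; also $X\neq 0$, since $X=0$ in \eqref{eq:star} forces $72\Delta^{4}=0$. Rewriting \eqref{eq:star} as $X^{4}-17\Delta^{2}X^{2}+72\Delta^{4}=4a^{2}u^{2}X^{2}$ gives the auxiliary divisibilities $X^{2}\mid 72\Delta^{4}$ and $\Delta^{2}\mid X^{2}(X^{2}-4a^{2}u^{2})$, which I will use to pin down $p$-adic valuations.

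The engine is a prime-by-prime valuation analysis. Fix $p\mid g$ and set $i:=v_{p}(X)\ge 1$, $j:=v_{p}(\Delta)\ge 1$. For odd $p$ one has $X^{2}-4a^{2}u^{2}\equiv-4a^{2}u^{2}\not\equiv 0\pmod p$, so $\Delta^{2}\mid X^{2}(X^{2}-4a^{2}u^{2})$ forces $j\le i$; comparing valuations in \eqref{eq:star} then rules out $i=j$ (the left side would have valuation $\ge 4i>2i$), so $j<i$, and the valuation balance $4j=2i$ gives $i=2j$ when $p\ge 5$, while for $p=3$ the extra $9=3^{2}$ inside $X^{2}-9\Delta^{2}$ bumps this to $i=2j+1$. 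This closes the small primes at once. For $p=3$, write $X=3^{2j+1}\xi$, $\Delta=3^{j}\eta$ with $3\nmid\xi\eta$; dividing \eqref{eq:star} by $3^{4j+2}$ and reducing mod $3$ gives $8\eta^{4}\equiv 4(au)^{2}\xi^{2}\pmod 3$, i.e.\ $2\equiv 1\pmod 3$ since each of $\eta^{4},(au)^{2},\xi^{2}$ is $\equiv 1$ — a contradiction. For $p=2$, $2\mid\Delta=u^{2}-a^{2}$ forces $a,u$ both odd, hence $v_{2}(\Delta)\ge 3$ and $au$ odd; expanding \eqref{eq:star} modulo a small power of $2$ (using $v_{2}(\Delta)\ge 3$) is then incompatible with $2\mid X$. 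So no prime $\le 3$ divides $g$.

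The case $p\ge 5$ is the main obstacle, and here local data alone is not enough: after $i=2j$, dividing \eqref{eq:star} by $p^{4j}$ yields $(p^{2j}\xi^{2}-8\eta^{2})(p^{2j}\xi^{2}-9\eta^{2})=4(au)^{2}\xi^{2}$ with $p\nmid\xi\eta au$, and reduction mod $p$ only says that $18$ is a square mod $p$, which is no contradiction. Instead I would exploit that $(X^{2}-8\Delta^{2})(X^{2}-9\Delta^{2})=(2auX)^{2}$ is a perfect square while $\gcd(X^{2}-8\Delta^{2},X^{2}-9\Delta^{2})=g^{2}$ (their difference is $\Delta^{2}$, so any common divisor divides $X^{2}$, hence divides $\gcd(X,\Delta)^{2}$; both factors are nonzero, as $X^{2}=8\Delta^{2}$ is impossible by the Remark and $X^{2}=9\Delta^{2}$ is incompatible with \eqref{eq:star}). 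Since $g^{4}\mid(2auX)^{2}$ gives $g^{2}\mid 2auX$, the coprime integers $(X^{2}-8\Delta^{2})/g^{2}$ and $(X^{2}-9\Delta^{2})/g^{2}$ have product a perfect square, hence each equals $\varepsilon$ times a square for a common sign $\varepsilon=\pm 1$. Writing $X=gx$, $\Delta=g\delta$ with $\gcd(x,\delta)=1$, this becomes one of the two systems $u_{1}^{2}-v_{1}^{2}=\delta^{2},\ x^{2}-u_{1}^{2}=8\delta^{2}$ ($\varepsilon=+1$) or $v_{1}^{2}-u_{1}^{2}=\delta^{2},\ x^{2}+u_{1}^{2}=8\delta^{2}$ ($\varepsilon=-1$) — a pair of simultaneous Pythagorean relations, equivalently a rational point on a genus-one curve (for $\varepsilon=+1$, setting $r=v_{1}/\delta$, $s=u_{1}/\delta$, $t=x/\delta$ and clearing denominators in $s^{2}=r^{2}+1$, $t^{2}=r^{2}+9$ yields $W^{2}=w^{4}+34w^{2}+1$). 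One then checks that this curve and its $\varepsilon=-1$ analogue carry only the degenerate rational points, namely those with $\delta=0$ or $v_{1}=0$, which force $\Delta=0$ or $X^{2}=9\Delta^{2}$ — both already excluded. Hence no prime $p\ge 5$ divides $g$, and $g=1$. The verification of the rational points on that genus-one curve (a $2$-descent / discriminant computation, in line with the elliptic-curve input elsewhere in the paper) is the only genuinely nontrivial step; everything else is elementary $p$-adic bookkeeping.
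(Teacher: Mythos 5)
Your treatment of the odd primes is essentially sound and runs parallel to the paper's own proof: the observation that $\Delta^{2}\mid X^{2}(X^{2}-4a^{2}u^{2})$ forces $\nu_{p}(\Delta)\le\nu_{p}(X)$ is a clean shortcut past the cases $d\ge x$; the $3$-adic endgame ($x=2d+1$, then $2\equiv 1\pmod 3$) is exactly the paper's; and for $p\ge 5$ you arrive, globally rather than $p$-locally, at the same genus-one curve $s^{2}=r^{2}+1$, $t^{2}=r^{2}+9$ whose Jacobian is the paper's $E_{0}:y^{2}=x(x+1)(x+9)$ of rank $0$. Granting that external input, your $\varepsilon=+1$ branch closes; the $\varepsilon=-1$ branch, which you merely assert, does need its own argument (a mod-$3$ sign determination, as in the paper, works: $x^{2}+n_{1}^{2}\equiv 0$ and then $2\delta^{2}\equiv m_{1}^{2}\pmod 3$ force $3\mid\gcd(x,\delta)$). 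It is worth noting that your coprime-square-factorization argument never actually uses the prime $p$: completed, it would show that \eqref{eq:star} has no solutions at all with $\Delta\ne 0$, which is strictly stronger than the lemma and would subsume the paper's later branch analysis.

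The genuine gap is $p=2$. Your claim that ``expanding \eqref{eq:star} modulo a small power of $2$'' disposes of $2\mid\gcd(X,\Delta)$ fails in the subcase $\nu_{2}(X)<\nu_{2}(\Delta)$. There the valuation balance forces $\nu_{2}(X)=1$, and for $\nu_{2}(\Delta)=d\ge 3$ both sides of \eqref{eq:star} have $2$-adic valuation $4$ while the odd parts $\bigl(X_{0}^{2}-2^{2d+1}\Delta_{0}^{2}\bigr)\bigl(X_{0}^{2}-9\cdot 2^{2d-2}\Delta_{0}^{2}\bigr)$ and $a^{2}u^{2}X_{0}^{2}$ are both $\equiv 1\pmod 8$; the congruence obstructions you invoke simply vanish. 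This is exactly why the paper's case (A) is its longest sub-argument: after the mod-$8$ sign determination $A=gm^{2}$, $B=gn^{2}$ one must extract $m^{2}-n^{2}=2^{2d-2}\Delta_{1}^{2}$ and $9m^{2}-8n^{2}=k^{2}$ and run a descent through the representation $(3m)^{2}=k^{2}+8n^{2}$ by the form $x^{2}+2y^{2}$. As written, your proposal leaves this $2$-adic case open; you would need either to supply an argument of that kind, or to observe that your global genus-one argument for ``$p\ge 5$'' in fact applies verbatim without any reference to $p$ and therefore covers $p=2$ (and $p=3$) as well.
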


\begin{proof}
Suppose, to the contrary, that a prime \(p\) divides both \(X\) and \(\Delta\)~\cite{NeukirchANT,IrelandRosen,KoblitzPadic,Narkiewicz,Marcus}.
Write
\[
X=p^{x}X_{0},\quad \Delta=p^{d}\Delta_{0},
\qquad x,d\ge 1,\ \ \gcd(X_{0},p)=\gcd(\Delta_{0},p)=1.
\]

\noindent\underline{Case \(p\ge 3\).}
As usual:
\[
\begin{aligned}
X^{2}-8\Delta^{2}&=p^{2x}\bigl(X_{0}^{2}-8p^{2(d-x)}\Delta_{0}^{2}\bigr),\\
X^{2}-9\Delta^{2}&=p^{2x}\bigl(X_{0}^{2}-9p^{2(d-x)}\Delta_{0}^{2}\bigr).
\end{aligned}
\]
If \(d>x\), both brackets are \(\not\equiv 0\pmod p\), and \(\nu_{p}(\text{LHS})=4x\).
The right-hand side has \(\nu_{p}(\text{RHS})=2x+\nu_{p}(4a^{2}u^{2})=2x\) (since \(\gcd(a,u)=1\Rightarrow p\nmid au\)). Contradiction.
If \(d=x\), the two brackets cannot both be divisible by \(p\) (otherwise \(\Delta_{0}^{2}\equiv 0\)), hence \(\nu_{p}(\text{LHS})\ge 4x+1>2x=\nu_{p}(\text{RHS})\). Contradiction~\cite{IrelandRosen,Narkiewicz}.

\medskip
\noindent\underline{Addendum: odd prime $p$, the hypothetical subcase $d<x$.}
For completeness, suppose $p$ is an odd prime with $p\mid\Delta$ and $x:=\nu_p(X)>d:=\nu_p(\Delta)\ge 1$.
Then one necessarily has
\[
\nu_p(X^2-8\Delta^2)=2d,\qquad
\nu_p(X^2-9\Delta^2)=2d,
\]
so that
\[
\nu_p\bigl((X^2-8\Delta^2)(X^2-9\Delta^2)\bigr)=4d.
\]
On the right-hand side of \eqref{eq:star} we have
\(
\nu_p(4a^2u^2X^2)=2x
\)
because $p\mid(u^2-a^2)$ implies $p\nmid a$ and $p\nmid u$.
Hence $4d=2x$ and therefore
\begin{equation}\label{eq:podd-x=2d}
x=2d.
\end{equation}
Cancelling $p^{4d}$ in \eqref{eq:star} yields
\[
\bigl(p^{2(x-d)}X_0^2-8\Delta_0^2\bigr)\bigl(p^{2(x-d)}X_0^2-9\Delta_0^2\bigr)
=4a^2u^2X_0^2,
\]
and with \eqref{eq:podd-x=2d} this becomes
\[
\bigl(p^{2d}X_0^2-8\Delta_0^2\bigr)\bigl(p^{2d}X_0^2-9\Delta_0^2\bigr)=4a^2u^2X_0^2.
\]
Reducing modulo $p$ (since $d\ge 1$) gives
\[
(-8\Delta_0^2)\cdot(-9\Delta_0^2)\equiv 4a^2u^2X_0^2\pmod p,
\]
i.e.
\begin{equation}\label{eq:18-square}
72\,\Delta_0^4 \equiv 4\,a^2u^2X_0^2 \pmod p
\quad\Longleftrightarrow\quad
18 \equiv \Bigl(\tfrac{auX_0}{\Delta_0^2}\Bigr)^{\!2}\pmod p.
\end{equation}
Thus $18$ must be a quadratic residue modulo $p$.
Since $\big(\tfrac{3^2}{p}\big)=1$, this is equivalent to
\[
\Bigl(\frac{18}{p}\Bigr)=\Bigl(\frac{2}{p}\Bigr)=1,
\]
and by the classical description of \((2/p)\) via quadratic reciprocity (see, e.g., \cite{HardyWright})
we obtain
\begin{equation}\label{eq:pmod8}
p\equiv 1 \ \text{or}\ 7 \pmod 8.
\end{equation}

Write $X_0=h\,\xi$ and $\Delta_0=h\,\Delta_1$ with $h:=\gcd(X_0,\Delta_0)$ and $\gcd(\xi,\Delta_1)=1$.
Set
\[
A':=p^{2d}\xi^{2}-8\Delta_1^{2},\qquad B':=p^{2d}\xi^{2}-9\Delta_1^{2}.
\]
From the computation of $\gcd(A,B)$ in the main text we have $\gcd(A',B')=1$ and
\[
A'B'=\Bigl(\frac{2au\,\xi}{h}\Bigr)^{\!2}.
\]
Since $A'B'\in\mathbb{Z}$, it follows that $\frac{2au\,\xi}{h}\in\mathbb{Z}$.
Hence there exist $\varepsilon\in\{\pm1\}$ and coprime integers $m,n$ such that
\begin{equation}\label{eq:ApBpSquares-refined}
A'=\varepsilon\,m^{2},\qquad B'=\varepsilon\,n^{2},\qquad \gcd(m,n)=1.
\end{equation}

\smallskip
\noindent\emph{Claim (sign determination).} $\varepsilon=+1$.

\noindent\emph{Proof.}
Reduce \eqref{eq:ApBpSquares-refined} modulo $3$. Since $p\ge5$, $p^{2d}\equiv 1\pmod 3$, whence
\[
B'\equiv \xi^{2}\pmod 3,\qquad A'\equiv \xi^{2}-2\Delta_1^{2}\equiv \xi^{2}+\Delta_1^{2}\pmod 3.
\]
If $\varepsilon=-1$, then $A'=-m^{2}$ and $B'=-n^{2}$, so $A',B'\in\{0,2\}\ (\bmod 3)$. From $B'\equiv \xi^{2}$ it follows that $\xi\equiv 0\pmod 3$, and then $A'\equiv -2\Delta_1^{2}\equiv \Delta_1^{2}\pmod 3$ forces $\Delta_1\equiv 0\pmod 3$, which in turn implies $3\mid m$ and $3\mid n$ — a contradiction to $\gcd(m,n)=1$. \qed

\smallskip
With $\varepsilon=+1$, reducing $B'=n^{2}$ modulo $p$ gives $n^{2}\equiv -9\Delta_1^{2}\pmod p$, hence $(-1/p)=1$ and therefore $p\equiv 1\pmod 4$.
Together with \eqref{eq:pmod8} (i.e. $(2/p)=1$) this forces the sharper congruence
\begin{equation}\label{eq:p=1mod8-refined}
p\equiv 1 \pmod 8.
\end{equation}
In particular, the branch $p\equiv 7\pmod 8$ is excluded.

\medskip
\noindent\emph{Residual subcase and current status.}
In the remaining configuration $p\equiv 1\pmod 8$ one arrives at the system
\[
m^{2}=p^{2d}\xi^{2}-8\Delta_1^{2},\qquad
n^{2}=p^{2d}\xi^{2}-9\Delta_1^{2},\qquad
m^{2}-n^{2}=\Delta_1^{2},
\]
with $\gcd(m,n)=1$ and $\gcd(\xi,\Delta_1)=1$.
Using the standard factorizations $(m\mp n)$ and the corresponding parameterizations in the odd/even parity of $\Delta_1$, one checks that both identities for $p^{2d}\xi^{2}$ reduce to the same expression; i.e., by the present (elementary) methods this residual case does not yield a contradiction.

\medskip
Retain the residual odd-prime setting $p\ge5$, $p\mid\Delta$, $d:=\nu_p(\Delta)\ge1$, $x:=\nu_p(X)>d$,
for which $x=2d$. As explained earlier, after clearing common factors the system
\[
m^{2}=p^{2d}\xi^{2}-8\Delta_1^{2},\qquad
n^{2}=p^{2d}\xi^{2}-9\Delta_1^{2},\qquad
\gcd(\xi,p\Delta_1)=1,
\]
yields the genus–$1$ curve
\[
\mathcal C:\quad \begin{cases}
m^{2}=u^{2}-8w^{2},\\
n^{2}=u^{2}-9w^{2},
\end{cases}
\qquad (m:n:w:u)\in\mathbb P^3,
\]
and the pencil of quadrics shows that $\Jac(\mathcal C)$ is the elliptic curve
\begin{equation}\label{eq:E0-def}
E_0:\quad y^{2}=x(x+1)(x+9).
\end{equation}
Moreover, the extra constraint coming from the residual system is precisely that
the $x$–coordinate on $E_0$ be a rational square: writing $u:=n/\Delta_1$, the two
congruences “$u^2+1$ and $u^2+9$ are squares” translate to
\[
(x,y)\in E_0(\mathbb{Q})\qquad\text{with}\qquad x=u^{2}\in (\mathbb{Q}^{\times})^{2}.
\]
Put differently, we need to decide whether $E_0(\mathbb{Q})$ contains a point with
$x$ a nonzero square. We now compute $E_0(\mathbb{Q})$ unconditionally.

\begin{proposition}[Torsion subgroup]\label{prop:torsion-E0}
For the elliptic curve
\[
E_0:\quad y^{2}=x(x+1)(x+9),
\]
the torsion subgroup is
\[
\begin{aligned}
E_0(\mathbb{Q})_{\mathrm{tors}}
&=\bigl\{\,O,\ (0,0),\ (-1,0),\ (-9,0),\ (3,\pm 12),\ (-3,\pm 6)\,\bigr\}\\
&\simeq \mathbb{Z}/2\mathbb{Z}\ \times\ \mathbb{Z}/4\mathbb{Z}.
\end{aligned}
\]
In particular, \(2(3,12)=(0,0)\) and \(2(-3,6)=(-1,0)\), so \((3,12)\) and \((-3,6)\) are points of order \(4\).
\end{proposition}

\begin{proof}
By the Nagell–Lutz theorem \cite{SilvermanAEC,CasselsLC}, all torsion points on a minimal integral model have integer coordinates.
The three nontrivial \(2\)-torsion points are the roots of the cubic: \(x\in\{0,-1,-9\}\), i.e.
\((0,0),(-1,0),(-9,0)\).

A direct substitution shows that \((3,\pm 12)\) and \((-3,\pm 6)\) lie on \(E_0\) because
\(12^{2}=3\cdot 4\cdot 12=144\) and \(6^{2}=(-3)\cdot(-2)\cdot 6=36\).
Using the duplication formula (or standard software/hand computation) one verifies that
\(2(3,12)=(0,0)\) and \(2(-3,6)=(-1,0)\), hence these points have order \(4\).
No further integral torsion points exist, so by Mazur’s theorem the torsion subgroup is exactly
\(\{O,(0,0),(-1,0),(-9,0),(3,\pm 12),(-3,\pm 6)\}\simeq \mathbb{Z}/2\times\mathbb{Z}/4\).
\end{proof}

\begin{theorem}[Rank via a minimal model]\label{thm:rank-zero}
For
\[
E_0:\quad y^{2}=x(x+1)(x+9)
\]
we have
\[
E_0(\mathbb{Q})\ \simeq\ \mathbb{Z}/2\mathbb{Z}\ \oplus\ \mathbb{Z}/4\mathbb{Z},\qquad \mathrm{rank}\,E_0(\mathbb{Q})=0.
\]
\end{theorem}

\begin{proof}
First remove the quadratic term in the Weierstrass equation: with the change of variables \(x=X-\tfrac{10}{3}\) we obtain
\[
y^{2}=X^{3}-\frac{73}{3}X+\frac{1190}{27}.
\]
Clearing denominators via \(X=\frac{x'}{9}\), \(y=\frac{y'}{27}\) gives the short integral model
\[
y'^{2}=x'^{3}-1971\,x'+32130.
\]
This curve is \(\mathbb{Q}\)-isomorphic to the minimal model
\[
E:\ y^{2}=x^{3}+x^{2}-24x+36,
\]
which has conductor \(N=48\) and belongs to the isogeny class \(48\mathrm{a}\).
From Cremona’s tables and the LMFDB we have
\[
E(\mathbb{Q})\simeq \mathbb{Z}/2\mathbb{Z}\oplus \mathbb{Z}/4\mathbb{Z},\qquad \mathrm{rank}\,E(\mathbb{Q})=0.
\]
Hence the same holds for \(E_0\). See \cite{CremonaEC,LMFDB48a3}.
\end{proof}

\begin{corollary}[No rational points with square $x$]\label{cor:no-square-x}
The only rational point of $E_0$ with $x$ a rational square is $(x,y)=(0,0)$.
\end{corollary}

\begin{proof}
By Proposition~\ref{prop:torsion-E0} and Theorem~\ref{thm:rank-zero},
$E_0(\mathbb{Q})$ is exactly the listed torsion set. Inspecting their $x$–coordinates
$\{0,-1,-9,\pm3\}$ shows that the only square among them is $x=0$.
\end{proof}

\begin{theorem}[Unconditional closure of the residual odd–prime branch]\label{thm:residual-closed}
In the residual configuration ($p\ge5$, $p\mid\Delta$, $x=2d>d\ge1$) the system above
has no nontrivial integer solutions (i.e. no solutions with $n\ne0$). Equivalently,
the subcase $d<x$ cannot occur.
\end{theorem}

\begin{proof}
A nontrivial solution forces a rational point on $E_0$ with $x=(n/\Delta_1)^2$
a nonzero square. By Corollary~\ref{cor:no-square-x} this is impossible.
\end{proof}

Hence no odd prime $p$ can divide both $X$ and $\Delta$.

\medskip
\noindent\underline{Case \(p=2\).}
Write \(X=2^{x}X_{0}\), \(\Delta=2^{d}\Delta_{0}\), \(x,d\ge 1\), \(X_{0},\Delta_{0}\) odd.

Consider three mutually exclusive options:

\smallskip
\emph{(B) \(2x>2d\).}
\begin{itemize}[label={},leftmargin=0em]
\item If \(x\ge d+2\) (i.e. \(2x\ge 2d+4\)), then
\(\nu_{2}(X^{2}-8\Delta^{2})=2d+3,\ \nu_{2}(X^{2}-9\Delta^{2})=2d\),
hence \(\nu_{2}(\text{LHS})=4d+3\) (odd), whereas \(\nu_{2}(\text{RHS})=2+\nu_{2}(a^{2}u^{2})+2x\) is even. Contradiction.
\item If \(x=d+1\) (i.e. \(2x=2d+2\)), then
\[
X^{2}-8\Delta^{2}=2^{2d}\bigl(4X_{0}^{2}-8\Delta_{0}^{2}\bigr)
=2^{2d+2}\,(X_{0}^{2}-2\Delta_{0}^{2}),
\]
where the bracket is odd; thus \(\nu_{2}(X^{2}-8\Delta^{2})=2d+2\).
Moreover,
\[
X^{2}-9\Delta^{2}=2^{2d}\bigl(4X_{0}^{2}-9\Delta_{0}^{2}\bigr),
\]
and \(4X_{0}^{2}-9\Delta_{0}^{2}\equiv 4-9\equiv 3\pmod 8\) is odd, hence \(\nu_{2}(X^{2}-9\Delta^{2})=2d\).
Therefore \(\nu_{2}(\text{LHS})=(2d+2)+2d=4d+2\).

Since \(\nu_{2}(\Delta)\ge 1\), the numbers \(a\) and \(u\) have the same parity; with \(\gcd(a,u)=1\) this forces both to be odd. Then \(\nu_{2}(a^{2}u^{2})=0\) and
\[
\nu_{2}(\text{RHS})=\nu_{2}\bigl(4a^{2}u^{2}X^{2}\bigr)=2+0+2x=2+2(d+1)=2d+4.
\]
Comparing, for \(d\ge 2\) we have \(4d+2\neq 2d+4\) (contradiction), while for \(d=1\) the valuations coincide and we must compare odd parts. Modulo \(8\):
\[
\frac{X^{2}-8\Delta^{2}}{2^{4}}\cdot\frac{X^{2}-9\Delta^{2}}{2^{2}}
=(X_{0}^{2}-2\Delta_{0}^{2})\,(4X_{0}^{2}-9\Delta_{0}^{2})
\equiv 7\cdot 3\equiv 5\pmod{8},
\]
whereas the odd part of the right-hand side is \(X_{0}^{2}\equiv 1\pmod 8\).
Contradiction. Hence the subcase \(x=d+1\) is impossible.
\end{itemize}

\emph{(C) \(2x=2d\).}
Then \(\nu_{2}(X^{2}-8\Delta^{2})=2d\) and \(\nu_{2}(X^{2}-9\Delta^{2})\ge 2d+3\)
(since \(X_{0}^{2}\equiv 1\pmod 8\)).
Thus \(\nu_{2}(\text{LHS})\ge 4d+3\) (odd), whereas \(\nu_{2}(\text{RHS})=2+\nu_{2}(a^{2}u^{2})+2x\) is even. Contradiction.

\medskip
\emph{(A) $p=2$ and $x<d$.}

Assume $2\mid \gcd(X,\Delta)$. Write $X=2^{x}X_{0}$ and $\Delta=2^{d}\Delta_{0}$ with $x\ge 1$, $d>x$, and $X_{0},\Delta_{0}$ odd. Since $2\mid\Delta$ and $\gcd(a,u)=1$, both $a$ and $u$ are odd.

\smallskip
\noindent\underline{Step 1: Valuation at $2$.}
Comparing $2$-adic valuations in \eqref{eq:star} gives
\[
\nu_2(\text{LHS})=4x,\qquad \nu_2(\text{RHS})=2x+2.
\]
Hence $4x=2x+2$ and therefore
\begin{equation}\label{eq:x-equals-1}
x=1,\qquad d\ge 2.
\end{equation}

\smallskip
\noindent\underline{Step 2: Normalization and the product identity.}
Set
\[
M:=2^{2d-2}\Delta_0^{2},\qquad
A:=X_0^{2}-8M,\qquad
B:=X_0^{2}-9M.
\]
Dividing \eqref{eq:star} by $16$ (using \eqref{eq:x-equals-1}) yields
\begin{equation}\label{eq:A-B-product}
A\cdot B=(auX_0)^2.
\end{equation}
As $X_0$ is odd, 
\[
\gcd(A,B)=\gcd(X_0^2-8M,\,X_0^2-9M)=\gcd(X_0^2,M)=\gcd(X_0^2,\Delta_0^2)=:g.
\]
Let $h:=\gcd(X_0,\Delta_0)$; then $g=h^2$ is an odd perfect square.

\smallskip
\noindent\underline{Step 3: Sign determination by residue modulo $8$.}
Since $8M\equiv 0\pmod 8$, we have
\[
A\equiv X_0^2\equiv 1\pmod 8.
\]
Write, for some $\varepsilon\in\{\pm 1\}$ and coprime integers $m,n\ge 0$,
\begin{equation}\label{eq:A-B-square-shape}
A=\varepsilon\,g\,m^{2},\qquad B=\varepsilon\,g\,n^{2},
\end{equation}
(this follows from \eqref{eq:A-B-product} and $\gcd(A/g,B/g)=1$).
Reducing the first identity in \eqref{eq:A-B-square-shape} modulo $8$ and using $g\equiv 1\pmod 8$ gives
\[
1\equiv A\equiv \varepsilon\,g\,m^2\equiv \varepsilon\pmod 8.
\]
Hence
\begin{equation}\label{eq:epsilon-plus}
\varepsilon=+1,\qquad\text{i.e.}\quad A=g\,m^{2}.
\end{equation}
Moreover,
\[
B\equiv X_0^2-9M\equiv 
\begin{cases}
1-4\equiv 5 \pmod 8,& d=2,\\
1-0\equiv 1 \pmod 8,& d\ge 3,
\end{cases}
\]
since $M\equiv 4\pmod 8$ for $d=2$ and $M\equiv 0\pmod 8$ for $d\ge 3$. But by \eqref{eq:A-B-square-shape}, \eqref{eq:epsilon-plus} we must have $B\equiv g n^2\equiv 1\pmod 8$.
Therefore the case $d=2$ is impossible, and we are left with
\begin{equation}\label{eq:d-atleast3}
d\ge 3,\qquad A=g\,m^2,\quad B=g\,n^2.
\end{equation}
In particular, $m,n$ are odd (because $A\equiv B\equiv 1\pmod 8$ and $g\equiv 1\pmod 8$), and $\gcd(m,n)=1$.

\smallskip
\noindent\underline{Step 4: Two Diophantine consequences.}
From $A-B=M$ and \eqref{eq:d-atleast3} we obtain
\begin{equation}\label{eq:m2-n2}
g\,(m^2-n^2)=M=2^{2d-2}\Delta_0^2.
\end{equation}
Write $\Delta_0=h\,\Delta_1$ (recall $g=h^2$). Then \eqref{eq:m2-n2} becomes
\begin{equation}\label{eq:m2-n2-clean}
m^2-n^2=2^{2d-2}\,\Delta_1^2.
\end{equation}
Using $9A-8B=X_0^2$ we also get
\begin{equation}\label{eq:9m2-8n2}
g\,(9m^2-8n^2)=X_0^2
\quad\Longrightarrow\quad
9m^2-8n^2=k^2
\end{equation}
for some odd integer $k$.

\smallskip
\noindent\underline{Step 5: Final contradiction via the binary form $x^2+2y^2$.}
From \eqref{eq:9m2-8n2} we have
\begin{equation}\label{eq:ternary}
(3m)^2=k^2+8n^2.
\end{equation}
Let $D:=\gcd(k,n)$. From \eqref{eq:ternary} one checks that $D$ is odd and $D\mid 3m$; hence $D=3^{j}$ with $j\in\{0,1\}$ (otherwise $\gcd(m,n)\ne 1$).

\emph{Case $j=0$ (primitive).} Then there exist coprime integers $s,t$ with
\[
3m=s^2+2t^2,\qquad n=st,\qquad k=\pm(s^2-2t^2),
\]
and $s,t$ are odd because $n$ is odd; see, e.g., \cite[Ch.~5, §2]{IrelandRosen}. Using \eqref{eq:m2-n2-clean} we compute
\[
m^2-n^2=\frac{(s^2+2t^2)^2}{9}-s^2t^2
=\frac{(s-t)(s+t)(s-2t)(s+2t)}{9}
=2^{2d-2}\Delta_1^2.
\]
Thus
\begin{equation}\label{eq:factor-odd-even}
(s-t)(s+t)\cdot(s-2t)(s+2t)=9\cdot 2^{2d-2}\,\Delta_1^2.
\end{equation}
Here $s\pm t$ are even, while $s\pm 2t$ are odd; also
\(\gcd(s-2t,s+2t)=\gcd(s-2t,4t)=1\).
Hence the odd part of \eqref{eq:factor-odd-even} equals $\pm 9\Delta_1^2$ and, by coprimeness, up to signs
\[
s-2t=A^2,\qquad s+2t=9B^2
\quad\text{or}\quad
s-2t=-A^2,\qquad s+2t=9B^2,
\]
for some odd $A,B$. In the first subcase,
\[
4t=(s+2t)-(s-2t)=9B^2-A^2=(3B-A)(3B+A).
\]
Both factors are even, and exactly one of them is divisible by $4$; thus $\nu_2(9B^2-A^2)\ge 3$, which contradicts $\nu_2(4t)=2$.  
In the second subcase,
\[
4t=9B^2+A^2\equiv 1+1\equiv 2\pmod 4,
\]
so $\nu_2(4t)=1$, again a contradiction.

\emph{Case $j=1$ (non-primitive).} Then there exist coprime odd $s,t$ such that
\[
m=s^2+2t^2,\qquad n=3st,\qquad k=\pm3(s^2-2t^2),
\]
and
\[
m^2-n^2=(s-t)(s+t)(s-2t)(s+2t)=2^{2d-2}\Delta_1^2.
\]
As above, from the odd part we get (up to signs)
\[
s-2t=A^2,\qquad s+2t=B^2
\quad\text{or}\quad
s-2t=-A^2,\qquad s+2t=B^2,
\]
with odd $A,B$. Hence
\[
4t=B^2-A^2=(B-A)(B+A)\quad\text{or}\quad 4t=B^2+A^2.
\]
In the first case $\nu_2(B^2-A^2)\ge 3$, contradicting $\nu_2(4t)=2$; in the second case $B^2+A^2\equiv 2\pmod 4$, so $\nu_2(4t)=1$, again a contradiction.

\smallskip
In all subcases we reach a contradiction. Therefore the subcase $p=2$ with $x<d$ is impossible.

\smallskip
\noindent\underline{Addendum: the odd prime \(p=3\) with \(d<x\).}
For completeness we treat the remaining subcase \(p=3\) under the standing assumption that \(p\mid X\) and \(p\mid \Delta\).
Write \(X=3^{x}X_{0}\), \(\Delta=3^{d}\Delta_{0}\) with \(x>d\ge 1\) and \(\gcd(X_{0},3)=\gcd(\Delta_{0},3)=1\).
Then
\[
X^{2}-8\Delta^{2}
=3^{2d}\!\Bigl(3^{2(x-d)}X_{0}^{2}-8\,\Delta_{0}^{2}\Bigr),\qquad
X^{2}-9\Delta^{2}
=3^{2d}\!\Bigl(3^{2(x-d)}X_{0}^{2}-9\,\Delta_{0}^{2}\Bigr).
\]
Hence
\begin{equation*}
\begin{aligned}
\nu_{3}\!\bigl(X^{2}-8\Delta^{2}\bigr) &= 2d,\\
\nu_{3}\!\bigl(X^{2}-9\Delta^{2}\bigr)
&= \nu_{3}\!\bigl((X-3\Delta)(X+3\Delta)\bigr)
= (d+1)+(d+1)=2d+2.
\end{aligned}
\end{equation*}
since \(x>d\) implies \(\nu_{3}(X\pm 3\Delta)=d+1\).
Therefore
\[
\nu_{3}\bigl(\text{LHS of \eqref{eq:star}}\bigr)=4d+2,
\qquad
\nu_{3}\bigl(\text{RHS of \eqref{eq:star}}\bigr)=2x,
\]
because \(\gcd(a,u)=1\) and \(3\mid \Delta=u^{2}-a^{2}\) force \(3\nmid au\).
Thus \(4d+2=2x\), i.e. \(x=2d+1\).

Divide \eqref{eq:star} by \(3^{4d+2}\) and reduce modulo \(3\):
\begin{equation}\label{eq:3-adic-reduction}
\begin{aligned}
\frac{X^{2}-8\Delta^{2}}{3^{2d}}\cdot \frac{X^{2}-9\Delta^{2}}{3^{2d+2}}
&= 4a^{2}u^{2}\cdot \frac{X^{2}}{3^{4d+2}}\\
&\Longrightarrow\ (-8\Delta_{0}^{2})(-\Delta_{0}^{2}) \equiv 4a^{2}u^{2}X_{0}^{2}\pmod{3}.
\end{aligned}
\end{equation}
As \(a,u,X_{0},\Delta_{0}\) are all coprime to \(3\), their squares are \(1\bmod 3\). Hence
\(8\cdot 1 \equiv 1\cdot 1 \pmod{3}\), i.e. \(2\equiv 1\pmod{3}\), a contradiction.
Therefore the configuration \(p=3\) with \(d<x\) is impossible as well.

In all cases we reach a contradiction. Hence no prime $p$ can divide both $X$ and $\Delta$, i.e. $\gcd(X,\Delta)=1$. \qedhere
\end{proof}

\begin{corollary}\label{cor:coprime}
If \(2\mid \Delta\), then \(2\nmid X\). If \(3\mid \Delta\), then \(3\nmid X\).
\end{corollary}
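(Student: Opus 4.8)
The plan is to obtain this as an immediate specialization of Lemma~\ref{lem:gcd}. That lemma asserts that \emph{every} integer $X$ satisfying the Diophantine equation \eqref{eq:star} is coprime to $\Delta$; in particular this applies to the specific $X=p-3\Delta$ that would arise from a putative $4{+}4$ factorization of type (E). So the only step is to invoke $\gcd(X,\Delta)=1$ and then read off the two claimed implications by specializing to the primes $p=2$ and $p=3$: since no prime divides both $X$ and $\Delta$, if $2\mid\Delta$ then $2\nmid X$ (equivalently $X$ is odd), and if $3\mid\Delta$ then $3\nmid X$.

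There is essentially no obstacle here: the full case analysis has already been carried out inside the proof of Lemma~\ref{lem:gcd}, which treats all primes — the generic odd primes $p\ge5$ (including the residual branch closed via the elliptic curve $E_0$), the dedicated treatment of $p=3$, and the separate $2$-adic analysis with its options (A), (B), (C). Hence both statements of the corollary are honest instances of the lemma rather than anything new. The one point worth flagging in the write-up is \emph{why} these two particular consequences are singled out: they are precisely the facts consumed downstream in the $2$- and $3$-adic arguments that exclude integer solutions of \eqref{eq:star}, where one needs $X$ to be a unit modulo $2$ whenever $2\mid\Delta$ and a unit modulo $3$ whenever $3\mid\Delta$.
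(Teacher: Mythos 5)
Your proposal is correct and matches the paper exactly: the corollary is stated immediately after Lemma~\ref{lem:gcd} with no separate proof, precisely because it is the specialization of $\gcd(X,\Delta)=1$ to the primes $2$ and $3$, as you say. Nothing further is needed.
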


\section{Complete Case Split by Divisibility of \(au\) by \(3\) and by Parity}

Set \(A_{0}:=a^{2}u^{2}\) (this is \emph{not} \(A=6\Delta\)).
We now work solely with equation \eqref{eq:star}.

\subsection*{Branch I: \(3\mid au\) — impossible}

With \(\gcd(a,u)=1\), exactly one of \(a,u\) is divisible by \(3\), hence \(\Delta=u^{2}-a^{2}\equiv \pm 1\pmod 3\), i.e. \(3\nmid \Delta\).

\emph{Subcase \(3\nmid X\).}
Then \(X^{2}\equiv 1\pmod 3\), and \(\Delta^{2}\equiv 1\pmod 3\), therefore
\[
X^{2}-8\Delta^{2}\equiv 1-2\equiv 2\ (\bmod 3),\qquad
X^{2}-9\Delta^{2}\equiv 1-0\equiv 1\ (\bmod 3),
\]
and \(\nu_{3}(\text{LHS})=0\). On the other hand, \(\nu_{3}(\text{RHS})=\nu_{3}(4A_{0})=2\nu_{3}(au)\ge 2\). Contradiction.

\emph{Subcase \(3\mid X\).}
Let \(x:=\nu_{3}(X)\ge 1\) and set \(k:=\nu_{3}(au)\ge 1\) (since \(\gcd(a,u)=1\) and \(3\mid au\), exactly one of \(a,u\) is divisible by \(3\)). Then
\[
\Delta=u^{2}-a^{2}\equiv \pm 1\pmod 3\qquad\text{and}\qquad \nu_{3}(\Delta)=0.
\]
We compute the $3$-adic valuations of the two factors on the left of \eqref{eq:star}:

\smallskip
\noindent\underline{First factor.}
Because \(\Delta\) is a $3$-adic unit and \(8\equiv -1\pmod 3\),
\[
X^{2}-8\Delta^{2}\equiv 0-(-1)\equiv 1\pmod 3,
\]
so
\begin{equation}\label{eq:nu3-first}
\nu_{3}\!\bigl(X^{2}-8\Delta^{2}\bigr)=0.
\end{equation}

\smallskip
\noindent\underline{Second factor.}
Write
\[
X^{2}-9\Delta^{2}=(X-3\Delta)(X+3\Delta).
\]
Since \(\nu_{3}(X)=x\ge 1\) and \(\nu_{3}(3\Delta)=1\), for \(x\ge 2\) we have
\[
X\pm 3\Delta=3\bigl(3^{x-1}X_{0}\pm \Delta\bigr)\quad\text{with}\quad 3\nmid\bigl(3^{x-1}X_{0}\pm \Delta\bigr),
\]
hence
\begin{equation}\label{eq:nu3-second-xge2}
\text{if }x\ge 2:\qquad \nu_{3}(X\pm 3\Delta)=1\ \ \text{and}\ \ \nu_{3}\!\bigl(X^{2}-9\Delta^{2}\bigr)=2.
\end{equation}
If \(x=1\), then
\[
X\pm 3\Delta=3\bigl(X_{0}\pm \Delta\bigr),\qquad X_{0},\Delta\ \text{are $3$-adic units}.
\]
At most one of \(X_{0}\pm \Delta\) is divisible by \(3\) (since \((X_{0}+\Delta)-(X_{0}-\Delta)=2\Delta\) is not divisible by \(3\)).
Therefore
\begin{equation}\label{eq:nu3-second-xeq1}
\text{if }x=1:\qquad
\nu_{3}\!\bigl(X^{2}-9\Delta^{2}\bigr)=\nu_{3}(X-3\Delta)+\nu_{3}(X+3\Delta)=2+r,
\end{equation}
for some integer \(r\ge 0\).

\smallskip
\noindent\underline{Comparison with the right-hand side.}
From \eqref{eq:star} and \eqref{eq:nu3-first} we get
\[
\nu_{3}(\text{LHS})=\nu_{3}\!\bigl(X^{2}-9\Delta^{2}\bigr).
\]
On the right,
\[
\nu_{3}(\text{RHS})=\nu_{3}\!\bigl(4a^{2}u^{2}X^{2}\bigr)
=2\,\nu_{3}(au)+2x
=2k+2x.
\]

\smallskip
If \(x\ge 2\), then by \eqref{eq:nu3-second-xge2} we have \(\nu_{3}(\text{LHS})=2\), whereas \(\nu_{3}(\text{RHS})=2k+2x\ge 2\cdot 1+2\cdot 2=6\), which is impossible.

If \(x=1\), then by \eqref{eq:nu3-second-xeq1} and equality of valuations we must have
\[
2+r=\nu_{3}(\text{LHS})=\nu_{3}(\text{RHS})=2k+2,
\]
hence
\begin{equation}\label{eq:r-equals-2k}
x=1\qquad\text{and}\qquad r=2k.
\end{equation}
Equivalently,
\[
\nu_{3}\!\bigl(X^{2}-9\Delta^{2}\bigr)=2k+2
\quad\Longleftrightarrow\quad
\nu_{3}\!\bigl(X_{0}^{2}-\Delta^{2}\bigr)=2k,
\]
i.e. \(X_{0}^{2}\equiv \Delta^{2}\ (\bmod\,3^{\,2k})\) but \(X_{0}^{2}\not\equiv \Delta^{2}\ (\bmod\,3^{\,2k+1})\).

\begin{lemma}[The edge case \(x=1,\ r=2k\) is impossible]\label{lem:edge-x1-r2k}
Assume \(\gcd(a,u)=1\), \(\Delta:=u^{2}-a^{2}\ne 0\), and \(A_{0}=a^{2}u^{2}\).
If \(\nu_{3}(au)=k\ge 1\), \(\nu_{3}(X)=1\), and \(\nu_{3}\!\bigl((X/3)^{2}-\Delta^{2}\bigr)=2k\)
(equivalently, \(x=1\) and \(r=2k\) in~\eqref{eq:r-equals-2k}), then \eqref{eq:star}
has no integer solutions.
\end{lemma}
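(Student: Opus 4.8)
\medskip
\noindent\textbf{Proof plan.}
The plan is to exploit the multiplicative structure of $(\star)$ together with the $2$- and $3$-adic information already in hand. Since $x=\nu_3(X)=1$, write $X=3X_0$ with $3\nmid X_0$. From $X^2=9X_0^2$ and $X^2-9\Delta^2=9(X_0^2-\Delta^2)$, equation \eqref{eq:star} becomes, after dividing by $9$,
\[
(9X_0^2-8\Delta^2)(X_0^2-\Delta^2)=4a^2u^2X_0^2 .
\]
Two structural facts will drive everything. First, the two left factors are coprime: $9(X_0^2-\Delta^2)-(9X_0^2-8\Delta^2)=-\Delta^2$, so any common divisor divides $\Delta^2$, hence also $X_0^2=(X_0^2-\Delta^2)+\Delta^2$; since $\gcd(X_0,\Delta)$ divides $\gcd(X,\Delta)=1$ by Lemma~\ref{lem:gcd}, the common divisor is $1$. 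Second, $X$ — hence $X_0$ — has no prime factor $\ge 5$: if a prime $p\ge 5$ divided $X$, then $p\nmid\Delta$ (again Lemma~\ref{lem:gcd}) would force $\nu_p(X^2-8\Delta^2)=\nu_p(X^2-9\Delta^2)=0$, contradicting $\nu_p(4a^2u^2X^2)\ge 2$. With $3\nmid X_0$, this yields $X_0=\pm 2^{\,j}$ for some $j\ge 0$.

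Because the right-hand side equals $(2auX_0)^2>0$ and neither left factor vanishes (a vanishing factor would make the product $0$), the two coprime factors share a common sign $\varepsilon\in\{\pm 1\}$ and their product is a perfect square; hence $9X_0^2-8\Delta^2=\varepsilon P^2$ and $X_0^2-\Delta^2=\varepsilon Q^2$ for integers $P,Q\ge 1$. The two signs are then excluded separately. If $\varepsilon=-1$: since $3\mid au$ and $\gcd(a,u)=1$, exactly one of $a,u$ is divisible by $3$, so $\Delta\equiv\pm 1\pmod{3}$ and $\Delta^2\equiv 1\pmod{3}$; reducing $9X_0^2-8\Delta^2=-P^2$ modulo $3$ gives $-P^2\equiv\Delta^2\equiv 1$, i.e.\ $P^2\equiv 2\pmod{3}$, which is impossible. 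If $\varepsilon=+1$: the relation $X_0^2-\Delta^2=Q^2$ reads $\Delta^2+Q^2=2^{2j}$ with $\Delta,Q$ both nonzero, and a short $2$-adic descent on $j$ finishes — for $j\ge 1$ the congruence $2^{2j}\equiv 0\pmod{4}$ forces $\Delta,Q$ both even, so one divides both by $2$ and lowers $j$ by $1$, eventually reaching a sum of two nonzero squares equal to $1$, a contradiction; the base case $j=0$ is immediate. Hence no $X$ satisfying the hypotheses can solve \eqref{eq:star}, which is the claim.

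The load-bearing steps are the two structural observations — coprimality of the two factors (which rests squarely on Lemma~\ref{lem:gcd}) and the reduction $X_0=\pm 2^{\,j}$. Once these are in place, the $\varepsilon=-1$ branch collapses to the one-line residue obstruction above and the $\varepsilon=+1$ branch collapses to the elementary fact that no power $2^{2j}$ is a sum of two nonzero squares, so I do not expect either of the final two branches to present any real difficulty; the only care needed is in tracking signs and excluding the degenerate cases where a factor equals $0$.
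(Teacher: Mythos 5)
Your argument is correct, and it reaches the conclusion by a route that differs from the paper's in its two decisive steps. The paper first reduces \eqref{eq:star} modulo $X$ (using $\gcd(X,\Delta)=1$) to get $X\mid 72$, so that $\nu_3(X)=1$ leaves only $X\in\{\pm3,\pm6,\pm12,\pm24\}$; it then eliminates the even candidates by comparing $2$-adic valuations of the two sides under a parity split on $a,u$, and finally, for $X=\pm3$, writes $(1-\Delta^2)(9-8\Delta^2)=(2au)^2$ with coprime factors and derives the impossible Pell-type relation $\Delta^2-s^2=1$. You instead strip from $X$ only the primes $p\ge5$ (leaving $X_0=\pm2^{\,j}$ with $j$ unbounded), pass at once to the coprime factorization $(9X_0^2-8\Delta^2)(X_0^2-\Delta^2)=(2auX_0)^2$, and dispose of the two sign branches uniformly: $\varepsilon=-1$ dies on the residue $9X_0^2-8\Delta^2\equiv\Delta^2\equiv 1\pmod 3$ (which cannot be $-P^2$, using $3\mid au\Rightarrow 3\nmid\Delta$), and $\varepsilon=+1$ dies because $\Delta^2+Q^2=2^{2j}$ has no solution with both terms nonzero. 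Your version buys uniformity in the $2$-part — no parity casework on $a,u$ and no finite list of candidate $X$ — while the paper's version reduces everything to a short finite check once $X\mid 72$ is in hand; both rest on Lemma~\ref{lem:gcd} and on the same ``coprime product equal to a square'' principle. The one point worth making fully explicit in a final write-up is the nonvanishing of the two factors (hence $P,Q\ge1$), which you correctly deduce from the strict positivity of the right-hand side.
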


\begin{proof}
By Lemma~\ref{lem:gcd} we have \(\gcd(X,\Delta)=1\).
Reducing \eqref{eq:star} modulo \(X\) gives
\[
(-8\Delta^{2})(-9\Delta^{2})\equiv 0\pmod X \ \Longrightarrow\ 72\,\Delta^{4}\equiv 0\pmod X,
\]
hence \(\Delta\) is invertible modulo \(X\) and \(X\mid 72\).
Since \(\nu_{3}(X)=1\), necessarily \(X\in\{\pm3,\pm6,\pm12,\pm24\}\).

\smallskip
\emph{(i) Both \(a,u\) odd.}
Then \(u\pm a\) are even with one of them divisible by \(4\), so
\(\nu_{2}(\Delta)=\nu_{2}(u-a)+\nu_{2}(u+a)\ge 3\), hence \(\Delta^{2}\equiv 0\pmod{16}\).
From \(\gcd(X,\Delta)=1\) we get \(2\nmid X\), i.e. \(X\) is odd.
Therefore
\(\nu_{2}(X^{2}-8\Delta^{2})=\nu_{2}(X^{2}-9\Delta^{2})=0\),
so \(\nu_{2}(\text{LHS})=0\), whereas
\(\nu_{2}(\text{RHS})=\nu_{2}(4A_{0}X^{2})=2\) (since \(A_{0}\) and \(X\) are odd) — a contradiction.

\smallskip
\emph{(ii) \(a,u\) of opposite parity.}
Then \(\Delta\) is odd and \(v_{2}(A_{0})\ge 2\).
For even \(X\) (i.e. \(X\in\{\pm6,\pm12,\pm24\}\)):
\[
\nu_{2}(X^{2}-8\Delta^{2})=
\begin{cases}
2,& \nu_{2}(X)=1,\\
3,& \nu_{2}(X)\ge 2,
\end{cases}
\qquad
\nu_{2}(X^{2}-9\Delta^{2})=0,
\]
thus \(\nu_{2}(\text{LHS})\in\{2,3\}\), while
\(\nu_{2}(\text{RHS})=2+\nu_{2}(A_{0})+2\nu_{2}(X)\ge 2+2+2=6\) — again a contradiction.
Hence \(X\) must be odd, i.e. \(X=\pm3\).

\smallskip
\emph{(iii) The remaining possibility \(X=\pm3\).}
Substituting \(X^{2}=9\) into \eqref{eq:star} and dividing by \(9\),
\[
(1-\Delta^{2})\,(9-8\Delta^{2})=(2au)^{2}.
\]
Moreover,
\[
\begin{aligned}
\gcd(1-\Delta^{2},\,9-8\Delta^{2})
&=\gcd\bigl(1-\Delta^{2},\, (9-8\Delta^{2})-8(1-\Delta^{2})\bigr)\\
&=\gcd(1-\Delta^{2},\,1)=1.
\end{aligned}
\]
Thus two coprime integers multiply to a square, so each factor is a square up to sign.
For \(|\Delta|\ge 2\) both factors are negative, hence they must be negative squares.
But \(1-\Delta^{2}=-s^{2}\) implies \(\Delta^{2}-s^{2}=1\), i.e. \((\Delta-s)(\Delta+s)=1\),
whose only integer solutions are \((\Delta,s)=(\pm1,0)\).
For \(\Delta=\pm1\) the left-hand side above is \(0\), whereas \((2au)^{2}>0\), a contradiction.
\end{proof}

Consequently, the only \(3\)-adic possibility under \(3\mid X\),
namely~\eqref{eq:r-equals-2k}, cannot occur. This completes Branch~I \((3\mid au)\).

Therefore, when \(3\mid au\), equation \eqref{eq:star} has no solutions.

\subsection*{Branch II: \(3\nmid au\) — impossible}

Here \(a^{2}\equiv u^{2}\equiv 1\ (\bmod 3)\), hence \(\Delta\equiv 0\ (\bmod 3)\) and, by Corollary \ref{cor:coprime}, \(3\nmid X\).

\smallskip
\noindent\underline{Sub-branch II.1: both \(a,u\) odd.}
Then \(u\pm a\) are even, with one of the sums divisible by \(4\); hence
\[
\nu_{2}(\Delta)=\nu_{2}(u-a)+\nu_{2}(u+a)\ge 3,\qquad \Delta^{2}\equiv 0\ (\bmod 16).
\]
From \(\gcd(X,\Delta)=1\) it follows that \(2\nmid X\), i.e. \(X\) is odd. Compare \eqref{eq:star} modulo \(16\):
\[
X^{2}-8\Delta^{2}\equiv X^{2},\qquad X^{2}-9\Delta^{2}\equiv X^{2}\ (\bmod 16).
\]
The left-hand side \(\equiv X^{4}\equiv 1\ (\bmod 16)\), while the right-hand side \(4A_{0}X^{2}\equiv 4\ (\bmod 16)\)~\cite{SerreCourse}.
Contradiction.

\smallskip
\noindent\underline{Sub-branch II.2: \(a,u\) of opposite parity.}
Here \(\Delta\) is odd, while \(\nu_{2}(A_{0})\ge 2\).

If \(X\) is even with \(\nu_{2}(X)=1\), then
\(\nu_{2}(X^{2}-8\Delta^{2})=2\) and \(\nu_{2}(X^{2}-9\Delta^{2})=0\), so \(\nu_{2}(\text{LHS})=2\), whereas \(\nu_{2}(\text{RHS})\ge 6\). Contradiction.

If \(X\) is even with \(\nu_{2}(X)\ge 2\), then
\(\nu_{2}(X^{2}-8\Delta^{2})=3\) and \(\nu_{2}(X^{2}-9\Delta^{2})=0\), so \(\nu_{2}(\text{LHS})=3\), whereas \(\nu_{2}(\text{RHS})\ge 8\). Contradiction.

\noindent\underline{Case \(X\) odd.}
Here \(\Delta\) is odd and, since we are in Branch~II (\(3\nmid au\)), we have \(3\mid\Delta\), \(3\nmid X\), and \(\gcd(X,\Delta)=1\) by Lemma~\ref{lem:gcd}.
Assume, for a contradiction, that \eqref{eq:star} holds:
\[
(X^{2}-8\Delta^{2})(X^{2}-9\Delta^{2})=4\,A_{0}\,X^{2},\qquad A_{0}=a^{2}u^{2}.
\]

\emph{Step 1: Reduction to \(X=\pm1\).}
Reducing \eqref{eq:star} modulo \(X\) gives
\[
(-8\Delta^{2})(-9\Delta^{2})\equiv 0 \pmod{X}\quad\implies\quad 72\,\Delta^{4}\equiv 0 \pmod{X}.
\]
Since \(\gcd(X,\Delta)=1\), this implies \(X\mid 72\). As \(X\) is odd and \(3\nmid X\), the only possibility is \(X=\pm 1\).

\emph{Step 2: Excluding the case \(X=\pm1\).}
With \(X^{2}=1\), the equation \eqref{eq:star} becomes
\begin{equation}\label{eq:X1-final}
(1-8\Delta^{2})(1-9\Delta^{2})=4A_{0}=(2au)^{2}.
\end{equation}
The right-hand side is a positive perfect square. Let \(Z:=1-8\Delta^{2}\) and \(W:=1-9\Delta^{2}\).
Note that for \(\Delta\neq 0\) both factors \(Z\) and \(W\) are negative integers.

\smallskip
\textit{Substep 2a: Coprimality of the factors.}
Using the Euclidean algorithm,
\begin{equation*}
\begin{aligned}
\gcd(Z,W)
&= \gcd(1-8\Delta^{2},\,1-9\Delta^{2})\\
&= \gcd(1-8\Delta^{2},\,-\Delta^{2})\\
&= \gcd(1-8\Delta^{2},\,\Delta^{2}).
\end{aligned}
\end{equation*}
Since \((1-8\Delta^{2})+8\Delta^{2}=1\), we have \(\gcd(1-8\Delta^{2},\,\Delta^{2})=\gcd(1,\,\Delta^{2})=1\).
Thus \(Z\) and \(W\) are coprime.

\smallskip
\textit{Substep 2b: Consequence for a square product.}
In \(\mathbb{Z}\), if a product of two coprime integers is a perfect square, then each factor is a square up to a unit; see, e.g., \cite{StewartTall}.
Since \(ZW=(2au)^{2}>0\) and \(Z,W<0\), their units must both be \(-1\); hence there exist integers \(m,n\) such that
\[
Z=-(m^{2}),\qquad W=-(n^{2}).
\]
From \(W=1-9\Delta^{2}=-(n^{2})\) we get
\[
(3\Delta)^{2}-n^{2}=1\quad\Longleftrightarrow\quad (3\Delta-n)(3\Delta+n)=1.
\]
The only factorizations of \(1\) in \(\mathbb{Z}\) are \(1\cdot 1\) and \((-1)\cdot(-1)\).
Both cases give \(3\Delta=\pm 1\), which is impossible for integer \(\Delta\).
(Equivalently, the only integer solutions of \(x^{2}-y^{2}=1\) are \(x=\pm 1,\,y=0\).)

Therefore \eqref{eq:X1-final} has no solutions, and the case \(X\) odd is impossible in Sub-branch~II.2.
Combining with the even cases for \(X\) treated above, Sub-branch~II.2 is closed.

Thus Branch \(3\nmid au\) is impossible.

\section{Completion of the Proof}

We have shown that equation \eqref{eq:star} has no integer solutions \(X\) either when \(3\mid au\) or when \(3\nmid au\).
By Theorem \ref{thm:equiv}, any \(4{+}4\) factorization yields a solution of \eqref{eq:star}; since \eqref{eq:star} has no integer solutions, a \(4{+}4\) factorization is impossible.

\begin{theorem}[Main result]
For any coprime integers \(a\neq u>0\), the polynomial \(P_{a,u}(t)\) does not factor in \(\mathbb{Z}[t]\) as a product of two monic polynomials of degree \(4\).
\end{theorem}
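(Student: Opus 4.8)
The plan is to assemble the structural results of the preceding sections into a single contradiction. Suppose, for contradiction, that $P_{a,u}=FG$ in $\mathbb{Z}[t]$ with $F,G$ monic of degree $4$. The first step is Lemma~\ref{lem:types}: since $P_{a,u}$ is primitive, monic and even, Gauss's lemma together with the involution $t\mapsto -t$ leaves only two possibilities for the pair $\{F,G\}$ (after a possible swap) — type (E), where both factors are even, $F=t^4+pt^2+q$ and $G=t^4+rt^2+s$; or type (C), a conjugate pair with $G(t)=F(-t)$. Excluding each of these two cases therefore suffices.

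For case (C), I would simply quote Theorem~\ref{thm:caseC-impossible}: matching coefficients in $F(t)F(-t)=P_{a,u}(t)$ forces $\delta=A_0$, then the reparametrization $\kappa^2=\alpha^2+12\Delta$ and $st=12\Delta$, and finally the elementary identity $(3s^2-3t^2-48m)^2+32s^2t^2=0$, which forces $st=0$, hence $\Delta=0$, contradicting $\Delta\neq 0$. So the factorization must be of type (E), and then Theorem~\ref{thm:equiv} applies: eliminating $q,s$ through \eqref{E1}--\eqref{E4} and setting $X:=p-3\Delta$ yields an integer $X$ satisfying the Diophantine equation \eqref{eq:star}, that is, $(X^2-8\Delta^2)(X^2-9\Delta^2)=4a^2u^2X^2$.

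It then remains to show that \eqref{eq:star} has no integer solution for coprime $a\neq u>0$, which is where the bulk of the work sits. I would first invoke the coprimality Lemma~\ref{lem:gcd}, $\gcd(X,\Delta)=1$, and then split on whether $3\mid au$. In Branch~I ($3\mid au$, whence $3\nmid\Delta$), the subcase $3\nmid X$ dies by a one-line $3$-adic valuation comparison, while $3\mid X$ is pinned to the edge case $x=1$, $r=2k$, ruled out by Lemma~\ref{lem:edge-x1-r2k} via $X\mid 72$ and the Pell-type equation $(\Delta-s)(\Delta+s)=1$. In Branch~II ($3\nmid au$, whence $3\mid\Delta$ and, by the lemma, $3\nmid X$), the even-$X$ subcases fall to $2$-adic valuation bookkeeping, and the odd-$X$ subcase again forces $X\mid 72$, hence $X=\pm1$, hence $(3\Delta-n)(3\Delta+n)=1$, which has no solution with $\Delta\neq 0$. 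Since \eqref{eq:star} has no integer solution, the assumed $4{+}4$ factorization cannot exist, and the theorem follows.

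I expect the genuine difficulty to lie upstream of this assembly, in Lemma~\ref{lem:gcd}. The residual odd-prime subcase $d<x$ with $p\ge 5$ cannot be dispatched by elementary $p$-adic bookkeeping alone: it is closed only by identifying the relevant genus-$1$ curve with the elliptic curve $E_0:y^2=x(x+1)(x+9)$, computing $E_0(\mathbb{Q})\simeq\mathbb{Z}/2\mathbb{Z}\oplus\mathbb{Z}/4\mathbb{Z}$ with rank $0$ (via its minimal model in the isogeny class $48\mathrm{a}$), and noting — Corollary~\ref{cor:no-square-x} — that the only rational point of $E_0$ with square $x$-coordinate is $(0,0)$. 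The other delicate point is the $2$-adic analysis in the subcase $p=2$, $x<d$, where one must track the parities of $s\pm t$ and $s\pm 2t$ and the exact power of $2$ dividing $4t$. With those ingredients in hand, the Main result is immediate by the contradiction outlined above.
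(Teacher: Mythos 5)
Your proposal is correct and takes essentially the same route as the paper: it assembles Lemma~\ref{lem:types}, Theorem~\ref{thm:caseC-impossible}, Theorem~\ref{thm:equiv}, Lemma~\ref{lem:gcd} (including the elliptic-curve closure of the residual odd-prime subcase via Corollary~\ref{cor:no-square-x}), and the Branch~I/Branch~II $3$-adic and $2$-adic analysis of \eqref{eq:star} into exactly the contradiction the paper draws. The only harmless imprecision is in your summary of Branch~II, where the sub-branch with both $a,u$ odd is actually closed by a mod-$16$ comparison rather than by the $X\mid 72$ argument, which is reserved for the opposite-parity sub-branch.
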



\section{Excluding a \(2{+}6\) Factorization: a Direct Criterion and a Discriminant Argument}\label{sec:exclude-26}

Recall the notation
\[
P_{a,u}(t)=t^{8}+A t^{6}+B t^{4}+C t^{2}+D,\qquad
A=6\Delta,\quad \Delta:=u^{2}-a^{2}\ne 0,
\]
\[
B=\Delta^{2}-2A_{0},\quad C=-A_{0}A,\quad D=A_{0}^{2},\qquad A_{0}:=a^{2}u^{2}.
\]
Thus \(P_{a,u}\) is even, monic, primitive in \(\mathbb{Z}[t]\) and admits the representation
\begin{equation}\label{eq:P-as-Q}
P_{a,u}(t)=Q(t^{2}),\qquad Q(x):=x^{4}+Ax^{3}+Bx^{2}+Cx+D\in\mathbb{Z}[x].
\end{equation}

We show that a factorization of type \(2{+}6\) is impossible.

\subsection*{Step 0: Structural split of the class \(2{+}6\)}

Suppose
\[
P_{a,u}(t)=Q_{2}(t)\cdot H_{6}(t),\qquad \deg Q_{2}=2,\ \deg H_{6}=6.
\]
By evenness of \(P_{a,u}\) and the involution \(t\mapsto -t\) (Lemma~\ref{lem:types}), we have:
\begin{itemize}[label={},leftmargin=0em]
\item If \(Q_{2}\) is \emph{not} even, then necessarily \(Q_{2}(-t)\mid H_{6}(t)\).
Grouping the conjugate factors, we obtain an even quartic and another quartic:
\[
P_{a,u}(t)
= \underbrace{Q_{2}(t)\,Q_{2}(-t)}_{\text{degree }4,\ \text{even}}
\;\cdot\;
\underbrace{\dfrac{H_{6}(t)}{Q_{2}(-t)}}_{\text{degree }4},
\]
i.e. the factorization regroups to the case \(4{+}4\), which has already been excluded.
\item Hence the only residue to analyze is the \emph{even} quadratic
\[
Q_{2}(t)=t^{2}+q,\qquad q\in\mathbb{Z}.
\]
\end{itemize}

We now rule out this last possibility by a direct necessary and sufficient condition plus a discriminant computation.

\subsection*{Step 1: Criterion for an even quadratic divisor}

\begin{lemma}[Even quadratic divisor criterion]\label{lem:even-quadratic-criterion}
For \(q\in\mathbb{Z}\) we have
\[
\boxed{\ (t^{2}+q)\ \mid\ P_{a,u}(t)\ \Longleftrightarrow\ Q(-q)=0\ },
\]
where \(Q\) is as in \eqref{eq:P-as-Q}. In other words,
\[
(t^{2}+q)\mid P_{a,u}(t)\ \Longleftrightarrow\ q^{4}-Aq^{3}+Bq^{2}-Cq+D=0.
\]
\end{lemma}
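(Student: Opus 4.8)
The plan is to exploit the even structure $P_{a,u}(t)=Q(t^{2})$ from \eqref{eq:P-as-Q} and reduce the divisibility question to a single evaluation of $Q$. The point is that $t^{2}+q$ is the image of the monic linear polynomial $x+q$ under $x\mapsto t^{2}$, so a division of $Q$ by $x+q$ transports directly to a division of $P_{a,u}$ by $t^{2}+q$.

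Concretely, first I will divide $Q(x)$ by $x+q$ in $\mathbb{Z}[x]$. Because the divisor is monic, the division algorithm keeps all coefficients integral, and the remainder theorem identifies the remainder:
\[
Q(x)=(x+q)\,\widetilde{Q}(x)+Q(-q),\qquad \widetilde{Q}\in\mathbb{Z}[x].
\]
Substituting $x\mapsto t^{2}$ then yields the key identity
\[
P_{a,u}(t)=(t^{2}+q)\,\widetilde{Q}(t^{2})+Q(-q)\qquad\text{in }\mathbb{Z}[t].
\]
For the implication $(\Leftarrow)$: if $Q(-q)=0$, this identity becomes $P_{a,u}(t)=(t^{2}+q)\,\widetilde{Q}(t^{2})$ with $\widetilde{Q}(t^{2})\in\mathbb{Z}[t]$, so $(t^{2}+q)\mid P_{a,u}(t)$. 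For $(\Rightarrow)$: if $(t^{2}+q)\mid P_{a,u}(t)$, then from the displayed identity $Q(-q)=P_{a,u}(t)-(t^{2}+q)\,\widetilde{Q}(t^{2})$ is divisible by $t^{2}+q$; since $Q(-q)$ is a constant while $\deg(t^{2}+q)=2>0$, this forces $Q(-q)=0$. Finally, expanding $Q(-q)=(-q)^{4}+A(-q)^{3}+B(-q)^{2}+C(-q)+D=q^{4}-Aq^{3}+Bq^{2}-Cq+D$ gives the explicit polynomial form in the statement.

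There is no genuine obstacle here; the lemma is purely formal. The only point worth stating carefully is that division by the \emph{monic} polynomial $x+q$ never leaves $\mathbb{Z}[x]$, so the factorization produced in the $(\Leftarrow)$ direction already lives in $\mathbb{Z}[t]$ and no appeal to Gauss's lemma is required. (If one prefers, the same computation can be phrased as reducing $P_{a,u}$ modulo $t^{2}+q$, i.e. setting $t^{2}\equiv -q$, which immediately returns $Q(-q)$ as the residue.)
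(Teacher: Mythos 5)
Your proof is correct and follows essentially the same route as the paper: divide $Q(x)$ by the monic linear polynomial $x+q$ in $\mathbb{Z}[x]$, identify the remainder as $Q(-q)$, substitute $x=t^{2}$, and read off the divisibility criterion. The only difference is that you spell out the two implications (and the integrality point) slightly more explicitly than the paper does, which is harmless.
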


\begin{proof}
Divide \(Q(x)\) by \(x+q\) in \(\mathbb{Z}[x]\):
\(
Q(x)=(x+q)R(x)+S
\)
with \(R\in\mathbb{Z}[x]\) and a constant remainder \(S=Q(-q)\).
Substituting \(x=t^{2}\) and using \eqref{eq:P-as-Q} gives
\[
P_{a,u}(t)=Q(t^{2})=(t^{2}+q)\,R(t^{2})+S.
\]
Thus \((t^{2}+q)\mid P_{a,u}\) if and only if \(S=0\), i.e. \(Q(-q)=0\).
\end{proof}

\begin{remark}
The case \(q=0\) is automatically impossible: if \(t^{2}\mid P_{a,u}(t)\), then the constant term must vanish, but \(D=A_{0}^{2}=a^{4}u^{4}>0\).
\end{remark}

\subsection*{Step 2: A discriminant obstruction}

We rewrite the equality \(Q(-q)=0\) from Lemma~\ref{lem:even-quadratic-criterion} as a quadratic equation in the unknown \(A_{0}=a^{2}u^{2}\) while \(\Delta\) and \(q\) are regarded as fixed integers. Using
\(
A=6\Delta,\ B=\Delta^{2}-2A_{0},\ C=-A_{0}A=-6\Delta A_{0},\ D=A_{0}^{2},
\)
we compute
\[
\begin{aligned}
Q(-q)&=q^{4}-Aq^{3}+Bq^{2}-Cq+D\\
&=q^{4}-6\Delta q^{3}+(\Delta^{2}-2A_{0})q^{2}+6\Delta A_{0}q+A_{0}^{2}\\
&=\underbrace{A_{0}^{2}}_{\text{quadratic in }A_{0}}
+\underbrace{(6\Delta q-2q^{2})}_{=:b}\,A_{0}
+\underbrace{(\Delta^{2}q^{2}-6\Delta q^{3}+q^{4})}_{=:c}.
\end{aligned}
\]
Thus \(Q(-q)=0\) is the quadratic equation in \(A_{0}\):
\[
A_{0}^{2}+b\,A_{0}+c=0,\qquad b=6\Delta q-2q^{2},\quad c=\Delta^{2}q^{2}-6\Delta q^{3}+q^{4}.
\]
Its discriminant with respect to \(A_{0}\) equals
\[
\begin{aligned}
\Disc_{A_{0}}
&=b^{\,2}-4c
=(6\Delta q-2q^{2})^{2}-4(\Delta^{2}q^{2}-6\Delta q^{3}+q^{4})\\
&=\bigl(36\Delta^{2}q^{2}-24\Delta q^{3}+4q^{4}\bigr)
-\bigl(4\Delta^{2}q^{2}-24\Delta q^{3}+4q^{4}\bigr)\\
&=\boxed{\,32\,\Delta^{2}\,q^{2}\,}.
\end{aligned}
\]

\begin{proposition}[Irrationality of the would-be roots]\label{prop:disc}
If \(\Delta\neq 0\) and \(q\neq 0\), then \(\Disc_{A_{0}}=32\,\Delta^{2}\,q^{2}\) is \emph{not} a perfect square in \(\mathbb{Z}\).
\end{proposition}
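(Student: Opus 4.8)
The plan is to show that $32\Delta^2 q^2 = (4\Delta q)^2 \cdot 2$, so that $\Disc_{A_0}$ is a perfect square if and only if $2$ is, which it is not. I would carry this out as follows.

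\medskip

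\noindent\textbf{Plan.}
First I would observe that since $\Delta \neq 0$ and $q \neq 0$, the integer $4\Delta q$ is nonzero, and we may factor
\[
\Disc_{A_0} = 32\,\Delta^2 q^2 = 2\cdot 16\,\Delta^2 q^2 = 2\,(4\Delta q)^2 .
\]
Next I would argue by contradiction: suppose $\Disc_{A_0} = N^2$ for some $N \in \mathbb{Z}$. Then $N^2 = 2(4\Delta q)^2$, so $(4\Delta q)^2 \mid N^2$, hence $4\Delta q \mid N$ (a square divides a square implies the base divides the base, up to sign, for nonzero integers); write $N = (4\Delta q)\,k$. Substituting gives $(4\Delta q)^2 k^2 = 2(4\Delta q)^2$, and cancelling the nonzero factor $(4\Delta q)^2$ yields $k^2 = 2$, which has no integer solution since $\sqrt{2}$ is irrational. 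This contradiction shows $\Disc_{A_0}$ is not a perfect square.

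\medskip

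\noindent Alternatively, and even more cleanly, one can invoke $2$-adic valuations directly: $\nu_2(\Disc_{A_0}) = \nu_2(32) + 2\nu_2(\Delta) + 2\nu_2(q) = 5 + 2\nu_2(\Delta) + 2\nu_2(q)$, which is odd, whereas the $2$-adic valuation of any perfect square is even. I would probably present this valuation argument as the primary proof since it is a one-line computation and mirrors the $2$-adic parity arguments used repeatedly elsewhere in the paper (e.g. in the remark on the legitimacy of cancellation and throughout the Key Lemma). The hypotheses $\Delta \neq 0$ and $q \neq 0$ are used only to ensure the valuations $\nu_2(\Delta)$ and $\nu_2(q)$ are finite, so that the sum $5 + 2\nu_2(\Delta) + 2\nu_2(q)$ is a well-defined odd integer.

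\medskip

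\noindent\textbf{Main obstacle.} Frankly, there is no real obstacle here: the statement reduces to the irrationality of $\sqrt{2}$, or equivalently to the observation that $32$ carries an odd power of $2$. The only point requiring a modicum of care is justifying "a square divides a square implies the base divides the base" in the elementary approach, or — in the valuation approach — simply recalling that $\nu_p$ of a perfect square is even; both are standard. The result then feeds directly into the conclusion that the quadratic $A_0^2 + bA_0 + c = 0$ has no integer (indeed no rational) root $A_0$, hence no admissible $q$ exists, completing the exclusion of the even quadratic divisor $t^2 + q$ and thereby the $2{+}6$ case.
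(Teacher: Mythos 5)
Your proposal is correct, and the $2$-adic valuation argument you designate as primary is exactly the paper's proof: $\nu_2(32\,\Delta^2 q^2)=5+2\nu_2(\Delta q)$ is odd, while a perfect square has even $2$-adic valuation. The elementary variant via $32\Delta^2q^2=2(4\Delta q)^2$ and the irrationality of $\sqrt{2}$ is also sound but adds nothing beyond the valuation computation.
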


\begin{proof}
We have \(\nu_{2}(\Disc_{A_{0}})=\nu_{2}(32)+2\nu_{2}(\Delta q)=5+2\nu_{2}(\Delta q)\), which is odd for all \(\Delta q\neq 0\).
A perfect square in \(\mathbb{Z}\) must have even \(2\)-adic valuation. Hence \(\Disc_{A_{0}}\) is not a square in \(\mathbb{Z}\).
\end{proof}

\begin{remark}
This “odd $2$-adic valuation of the discriminant forces non-squareness” obstruction is a standard device in elementary Diophantine arguments; compare also the problem-oriented expositions in \cite{MurtyEsmonde}.
\end{remark}

\begin{corollary}[No integer solution for \(A_{0}\)]\label{cor:no-int-A0}
For \(\Delta\neq 0\) and \(q\neq 0\) the quadratic equation \(A_{0}^{2}+bA_{0}+c=0\) has no solutions \(A_{0}\in\mathbb{Z}\).
\end{corollary}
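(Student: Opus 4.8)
The plan is to derive this statement in one stroke from Proposition~\ref{prop:disc}, using only the elementary fact that a monic quadratic over \(\mathbb{Z}\) can possess an integer root exactly when its discriminant is a perfect square. Concretely, I would argue by contradiction: suppose some \(A_{0}\in\mathbb{Z}\) satisfies \(A_{0}^{2}+bA_{0}+c=0\) with \(b=6\Delta q-2q^{2}\) and \(c=\Delta^{2}q^{2}-6\Delta q^{3}+q^{4}\). Completing the square yields the integer identity \((2A_{0}+b)^{2}=b^{2}-4c=\Disc_{A_{0}}=32\,\Delta^{2}q^{2}\), so that \(32\,\Delta^{2}q^{2}\) would be the square of the integer \(2A_{0}+b\). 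Since \(\Delta\neq 0\) and \(q\neq 0\) by hypothesis — and the excluded value \(q=0\) is in any case irrelevant, as noted in the remark following Lemma~\ref{lem:even-quadratic-criterion} — Proposition~\ref{prop:disc} applies and asserts that \(32\,\Delta^{2}q^{2}\) is \emph{not} a perfect square. This contradiction shows no such \(A_{0}\) exists.

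There is essentially no obstacle here, since the arithmetic content has already been isolated in Proposition~\ref{prop:disc}, whose proof is the short count \(\nu_{2}(32\,\Delta^{2}q^{2})=5+2\nu_{2}(\Delta q)\), an odd integer. The only point meriting a line of care is the implication ``integer root \(\Rightarrow\) square discriminant'' for a \emph{monic} integer quadratic: this is precisely the completing-the-square identity displayed above, and monicity is exactly what guarantees that \(2A_{0}+b\) is an integer, with no surviving denominator. I would then immediately flag the intended downstream use. Combined with Lemma~\ref{lem:even-quadratic-criterion}, this corollary shows that no even quadratic \(t^{2}+q\) with \(q\neq 0\) divides \(P_{a,u}(t)\); together with the structural split of Step~0, which regroups every \emph{non-even} quadratic divisor into the already-excluded \(4{+}4\) case, this rules out the \(2{+}6\) pattern entirely. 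The remaining patterns \(2{+}2{+}4\), \(2{+}2{+}2{+}2\), and \(3{+}3{+}2\) then regroup trivially to \(2{+}6\) and are likewise impossible, which completes the irreducibility proof announced in the abstract.
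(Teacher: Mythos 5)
Your proof is correct and takes essentially the same route as the paper: both reduce to Proposition~\ref{prop:disc} via the observation that an integer root of a monic integer quadratic forces the discriminant \(32\,\Delta^{2}q^{2}\) to be a perfect square. The paper phrases this through the quadratic formula (irrational roots), while you complete the square to exhibit \((2A_{0}+b)^{2}=\Disc_{A_{0}}\) explicitly; the difference is purely presentational.
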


\begin{proof}
The roots are \(\dfrac{-b\pm\sqrt{\Disc_{A_{0}}}}{2}\); by Proposition~\ref{prop:disc} the discriminant is not an integer square, hence the roots are irrational.
\end{proof}

\subsection*{Step 3: Conclusion for \(2{+}6\)}

\begin{theorem}[No \(2{+}6\) factorization]\label{thm:no-2-6}
Let \(a,u\in\mathbb{Z}_{>0}\) be coprime and \(a\ne u\) (so \(\Delta\ne 0\)).
Then \(P_{a,u}(t)\) does not factor in \(\mathbb{Z}[t]\) as a product of a quadratic and a sextic polynomial.
\end{theorem}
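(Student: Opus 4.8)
The plan is to reduce everything to the two tools already assembled in this section: the structural split of the class $2{+}6$ (Step 0) and the even-quadratic-divisor criterion together with its discriminant obstruction (Steps 1--2). So the proof is essentially a bookkeeping argument that chains these together.

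First I would invoke Step 0: given a putative factorization $P_{a,u}=Q_2\cdot H_6$ with $\deg Q_2=2$ and $\deg H_6=6$, apply Gauss's lemma (Lemma~\ref{lem:types}) so that both factors may be taken primitive and monic. Then use the involution $\tau\colon t\mapsto -t$, which fixes $P_{a,u}$. If $Q_2$ is not even, then $\tau(Q_2)=Q_2(-t)$ is another monic quadratic dividing $P_{a,u}$, and since $Q_2\nmid Q_2(-t)$ (they are distinct irreducibles, or at worst we check the degenerate overlap directly) we get $Q_2(-t)\mid H_6(t)$; regrouping $Q_2(t)Q_2(-t)$ as an even quartic leaves a complementary quartic, so the factorization is really of type $4{+}4$. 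That case was excluded in the preceding sections (the Main result on $4{+}4$), contradiction. Hence we may assume $Q_2$ is even, i.e.\ $Q_2(t)=t^2+q$ for some $q\in\mathbb{Z}$.

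Next I would rule out $q=0$ immediately: $t^2\mid P_{a,u}$ would force the constant term $D=A_0^2=a^4u^4$ to vanish, impossible since $a,u>0$. So $q\neq 0$. Now apply Lemma~\ref{lem:even-quadratic-criterion}: $(t^2+q)\mid P_{a,u}(t)$ is equivalent to $Q(-q)=0$. By the computation in Step~2, the relation $Q(-q)=0$ is precisely the quadratic equation $A_0^2+bA_0+c=0$ in the integer unknown $A_0=a^2u^2$, with $b=6\Delta q-2q^2$ and $c=\Delta^2q^2-6\Delta q^3+q^4$, whose discriminant is $\Disc_{A_0}=32\,\Delta^2q^2$. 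Since $\Delta\neq 0$ and $q\neq 0$, Proposition~\ref{prop:disc} shows this discriminant has odd $2$-adic valuation, hence is not a perfect square, so by Corollary~\ref{cor:no-int-A0} there is no integer $A_0$ satisfying the equation. But $A_0=a^2u^2$ is a (positive) integer, contradiction. Therefore no even quadratic $t^2+q$ divides $P_{a,u}$, and combined with the structural split this shows no $2{+}6$ factorization exists.

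I do not expect a serious obstacle here, since all the analytic content is already packaged in Steps 0--2. The one point deserving a careful sentence rather than a hand-wave is the claim ``$Q_2\nmid Q_2(-t)$ hence $Q_2(-t)\mid H_6$'' in the non-even case: one should note that $Q_2$ and $Q_2(-t)$, being monic of the same degree, divide each other only if they are equal, which happens exactly when $Q_2$ is even; so in the non-even case they are coprime (or at least distinct monic irreducibles) and unique factorization in $\mathbb{Q}[t]$ forces $Q_2(-t)$ into the complementary factor $H_6$. Everything else is substitution and the valuation bound already proved.
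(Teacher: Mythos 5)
Your proposal is correct and follows essentially the same route as the paper's proof: reduce via the involution $t\mapsto -t$ either to the already-excluded $4{+}4$ case or to an even quadratic $t^{2}+q$, dispose of $q=0$ via $D\neq 0$, and kill $q\neq 0$ with the criterion $Q(-q)=0$ and the non-square discriminant $32\Delta^{2}q^{2}$. Your extra sentence justifying $Q_{2}(-t)\mid H_{6}(t)$ in the non-even case (distinctness of the monic quadratics under unique factorization in $\mathbb{Q}[t]$, with the reducible-quadratic degeneracy handled separately) addresses a point the paper leaves implicit, and is a worthwhile addition.
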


\begin{proof}
As noted above, any \(2{+}6\) with a non-even quadratic regroups to a \(4{+}4\), which is impossible.
Thus it remains to exclude an even quadratic \(t^{2}+q\).
By Lemma~\ref{lem:even-quadratic-criterion}, \((t^{2}+q)\mid P_{a,u}\) iff \(Q(-q)=0\).
If \(q=0\), divisibility by \(t^{2}\) would force \(D=0\), which is false. If \(q\ne 0\), then by
Corollary~\ref{cor:no-int-A0} the equality \(Q(-q)=0\) has no solutions \(A_{0}=a^{2}u^{2}\in\mathbb{Z}\).
Hence there is no \(q\in\mathbb{Z}\) for which \(t^{2}+q\) divides \(P_{a,u}\).
Therefore no \(2{+}6\) factorization exists.
\end{proof}

\begin{remark}[What this uses from previous sections]
The proof is logically independent of the \(4{+}4\) Diophantine analysis, except for the purely structural observation that a non-even quadratic factor forces regrouping into \(4{+}4\)
(via pairing \(Q_{2}(t)\) with its conjugate \(Q_{2}(-t)\)).
The “hard” residue (even quadratic \(t^{2}+q\)) is completely settled by Lemma~\ref{lem:even-quadratic-criterion} and the discriminant computation.
\end{remark}


\section{Excluding other factorizations}

After Theorem~\ref{thm:no-2-6} has ruled out all factorizations of type \(2{+}6\), the remaining degree–8 patterns are excluded by trivial regrouping.

\begin{proposition}\label{prop:others-to-26}
Let $P_{a,u}(t)\in\mathbb{Z}[t]$ be as above. If any of the following factorizations exists,
then $P_{a,u}$ admits a factorization of type \(2{+}6\):
\begin{enumerate}[label=(\alph*),nosep]
\item \(2{+}2{+}4\):\quad $P_{a,u}=Q_1\,Q_2\,H_4$ with $\deg Q_i=2$, $\deg H_4=4$;
\item \(2{+}2{+}2{+}2\):\quad $P_{a,u}=Q_1\,Q_2\,Q_3\,Q_4$ with $\deg Q_i=2$;
\item \(3{+}3{+}2\):\quad $P_{a,u}=F_3\,G_3\,Q_2$ with $\deg F_3=\deg G_3=3$, $\deg Q_2=2$.
\end{enumerate}
\end{proposition}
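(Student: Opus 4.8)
The plan is to show that each pattern contains, or can be regrouped into, a partition with a block of size $2$ and a complementary block of size $6$, so that Theorem~\ref{thm:no-2-6} applies directly. The guiding principle is trivial: if $P_{a,u}=P_1P_2\cdots P_k$ in $\mathbb{Z}[t]$ with all $P_i$ nonconstant (and, by Gauss's lemma, each may be taken primitive; since $P_{a,u}$ is monic, each $P_i$ may be taken monic after adjusting signs), then for any subset $S\subseteq\{1,\dots,k\}$ the product $\prod_{i\in S}P_i$ is a monic factor in $\mathbb{Z}[t]$, and $P_{a,u}$ factors as that product times its complement. So it suffices to exhibit, in each case, a subset whose degrees sum to $2$.

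First I would dispose of (a): here $\{\deg Q_1,\deg Q_2,\deg H_4\}=\{2,2,4\}$, so taking $S=\{Q_1\}$ gives $P_{a,u}=Q_1\cdot(Q_2H_4)$ with $\deg Q_1=2$ and $\deg(Q_2H_4)=6$, a $2{+}6$ factorization. Case (b) is identical: with four quadratic factors, pick any one of them as the degree-$2$ block and group the other three into the degree-$6$ block, i.e. $P_{a,u}=Q_1\cdot(Q_2Q_3Q_4)$. Case (c) is the same once one observes that a block of size $2$ is already present: $P_{a,u}=Q_2\cdot(F_3G_3)$ with $\deg Q_2=2$ and $\deg(F_3G_3)=6$. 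In every case the regrouped factors lie in $\mathbb{Z}[t]$ (products of polynomials in $\mathbb{Z}[t]$), are monic (products of monic polynomials), and have the required degrees, so $P_{a,u}$ admits a $2{+}6$ factorization, as claimed.

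The only point requiring a word of care — and the ``main obstacle,'' such as it is — is the bookkeeping about monicity and integrality of the regrouped blocks: one should note that Gauss's lemma (as already invoked in Lemma~\ref{lem:types}) guarantees that in any factorization of the monic primitive polynomial $P_{a,u}$ into nonconstant factors in $\mathbb{Z}[t]$, each factor may be normalized to be monic, and then products of these factors are again monic elements of $\mathbb{Z}[t]$; hence the ``$2{+}6$'' obtained is genuinely a factorization into monic integer polynomials of degrees $2$ and $6$, exactly the object excluded by Theorem~\ref{thm:no-2-6}. Combining this proposition with Theorem~\ref{thm:no-2-6} then immediately yields that none of the patterns $2{+}2{+}4$, $2{+}2{+}2{+}2$, $3{+}3{+}2$ can occur, and together with the already-established impossibility of $4{+}4$ and $2{+}6$ (and the trivial impossibility of a linear factor, since $P_{a,u}$ is even with positive constant term $D=A_0^2$ and no real root of the form $t=0$), this exhausts all nontrivial degree-$8$ splittings and establishes the irreducibility of $P_{a,u}$ over $\mathbb{Z}$.
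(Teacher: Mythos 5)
Your regrouping is exactly the paper's proof: in each case you isolate one degree-$2$ factor and multiply the remaining factors into a degree-$6$ block, then invoke Theorem~\ref{thm:no-2-6}. The additional remarks on monicity via Gauss's lemma are correct but not essential; the argument is the same.
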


\begin{proof}
(a) Group as $P_{a,u}=\underbrace{Q_1}_{\deg=2}\cdot\underbrace{(Q_2H_4)}_{\deg=6}$.

(b) Group as $P_{a,u}=\underbrace{Q_1}_{\deg=2}\cdot\underbrace{(Q_2Q_3Q_4)}_{\deg=6}$.

(c) Group as $P_{a,u}=\underbrace{Q_2}_{\deg=2}\cdot\underbrace{(F_3G_3)}_{\deg=6}$.
\end{proof}

\begin{corollary}\label{cor:others-impossible}
None of the patterns \(2{+}2{+}4\), \(2{+}2{+}2{+}2\), or \(3{+}3{+}2\) can occur for \(P_{a,u}(t)\).
\end{corollary}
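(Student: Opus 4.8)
The plan is to reduce every one of the three remaining degree--$8$ patterns to the case $2{+}6$, which has already been excluded in Theorem~\ref{thm:no-2-6}, by nothing more than a regrouping of factors.

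First I would record the elementary but crucial fact that a subproduct of polynomials in $\mathbb{Z}[t]$ is again in $\mathbb{Z}[t]$: multiplying out all but one of the factors in each pattern, and keeping a chosen degree--$2$ factor aside, produces a genuine factorization $P_{a,u}=Q\cdot H$ with $\deg Q=2$ and $\deg H=6$, both with integer coefficients. Concretely, in $2{+}2{+}4$ group $Q_1$ against $Q_2H_4$; in $2{+}2{+}2{+}2$ group $Q_1$ against $Q_2Q_3Q_4$; in $3{+}3{+}2$ group $Q_2$ against $F_3G_3$. This is exactly Proposition~\ref{prop:others-to-26}.

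Then I would invoke Theorem~\ref{thm:no-2-6}, which asserts that for coprime $a\neq u>0$ the polynomial $P_{a,u}$ has no factorization in $\mathbb{Z}[t]$ into a quadratic times a sextic. Combining this with Proposition~\ref{prop:others-to-26} by contraposition: the existence of any of the three finer patterns would produce such a $2{+}6$ splitting, contradicting Theorem~\ref{thm:no-2-6}. Hence none of $2{+}2{+}4$, $2{+}2{+}2{+}2$, or $3{+}3{+}2$ can occur for $P_{a,u}(t)$.

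There is essentially no obstacle here: all the genuine work has been front-loaded into Theorem~\ref{thm:no-2-6} (the even--quadratic divisor criterion of Lemma~\ref{lem:even-quadratic-criterion} together with the $2$-adic discriminant obstruction of Proposition~\ref{prop:disc}) and, upstream of that, into the exclusion of the $4{+}4$ patterns \emph{(E)} and \emph{(C)}. The only point one must not overlook is that the regrouping step is harmless --- taking a partial product of the given factors introduces no content or denominator --- so the reduction genuinely lands inside the scope of Theorem~\ref{thm:no-2-6}.
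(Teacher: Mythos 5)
Your proposal is correct and matches the paper's own argument exactly: regroup each finer pattern into a quadratic times a sextic (Proposition~\ref{prop:others-to-26}) and then invoke Theorem~\ref{thm:no-2-6}. Nothing further is needed.
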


\begin{proof}
By Proposition~\ref{prop:others-to-26} each would imply a \(2{+}6\) factorization, which is impossible by Theorem~\ref{thm:no-2-6}.
\end{proof}


\section{Irreducibility in Full}\label{sec:irreducibility}

\begin{theorem}[Irreducibility]\label{thm:irreducible}
For any coprime integers $a\ne u>0$, the polynomial $P_{a,u}(t)$ is irreducible in $\mathbb{Z}[t]$.
\end{theorem}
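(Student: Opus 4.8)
The plan is to deduce irreducibility as a formal corollary of the three exclusion statements already proved — the impossibility of a \(4{+}4\) factorization (the Main Result established in the earlier sections), of a \(2{+}6\) factorization (Theorem~\ref{thm:no-2-6}), and of a \(3{+}3{+}2\) factorization (Corollary~\ref{cor:others-impossible}) — together with the evenness of \(P_{a,u}\) and a minimal-degree-factor argument in the spirit of Lemma~\ref{lem:types}. Since \(P_{a,u}\) is monic and primitive, Gauss's lemma reduces the claim to showing that \(P_{a,u}\) admits no splitting in \(\mathbb{Z}[t]\) into two monic factors of positive degree, i.e.\ that \(P_{a,u}\) is irreducible over \(\mathbb{Q}\)~\cite{DummitFoote,ConradGauss}. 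Assume for contradiction that \(P_{a,u}\) is reducible, and fix a monic \emph{irreducible} factor \(R\in\mathbb{Z}[t]\) of \(P_{a,u}\) of minimal degree \(d\); since \(\deg P_{a,u}=8\) and \(P_{a,u}\) is reducible, \(d\in\{1,2,3,4\}\).

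Two preliminary observations feed the case analysis. First, \(t\nmid P_{a,u}\), because \(P_{a,u}(0)=D=a^{4}u^{4}>0\); hence no irreducible factor of \(P_{a,u}\) has vanishing constant term. Second, the involution \(\sigma\colon t\mapsto -t\) fixes \(P_{a,u}\), so the normalized conjugate \(R^{\sigma}(t):=(-1)^{d}R(-t)\) is again a monic irreducible factor of \(P_{a,u}\) of degree \(d\); moreover \(R^{\sigma}=R\) forces \(R\) to be an even polynomial when \(d\) is even and an odd polynomial when \(d\) is odd, and the odd case contradicts the first observation (it would give \(t\mid R\)).

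The case split then runs as follows. If \(d=1\), then \(R=t-c\) with \(c\ne 0\), so \(R^{\sigma}=t+c\ne R\) also divides \(P_{a,u}\), and \(t^{2}-c^{2}\mid P_{a,u}\) is an even quadratic divisor — a \(2{+}6\) factorization, excluded by Theorem~\ref{thm:no-2-6}. If \(d=2\): when \(R\) is even it is itself an even quadratic divisor, again a \(2{+}6\) factorization excluded by Theorem~\ref{thm:no-2-6}; when \(R\) is not even, \(R^{\sigma}\ne R\) is a second irreducible quadratic factor coprime to \(R\), so \(R\,R^{\sigma}\) is a monic quartic divisor of \(P_{a,u}\), yielding a \(4{+}4\) factorization, excluded by the Main Result. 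If \(d=3\): here \(R^{\sigma}\ne R\) necessarily, so \(R\,R^{\sigma}\) is a monic degree-\(6\) divisor whose complementary factor has degree \(2\); reading this as \(P_{a,u}=(R\,R^{\sigma})\cdot Q_{2}\) it is a \(3{+}3{+}2\) (hence a fortiori a \(2{+}6\)) factorization, excluded by Corollary~\ref{cor:others-impossible} (equivalently by Theorem~\ref{thm:no-2-6}). If \(d=4\): minimality forces \(P_{a,u}=R_{1}R_{2}\) with \(R_{1},R_{2}\) monic of degree \(4\) (possibly equal), a \(4{+}4\) factorization excluded by the Main Result. In every case we reach a contradiction, so \(P_{a,u}\) is irreducible in \(\mathbb{Z}[t]\).

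I do not anticipate a genuine obstacle: this final step is pure orchestration of the earlier, harder results, and all its content is bookkeeping. The only points demanding mild care are the sign normalization that keeps \(R^{\sigma}\) monic, the repeated appeal to Gauss's lemma to guarantee that each regrouped factor lies in \(\mathbb{Z}[t]\) and is monic, and the uniform elimination of degenerate sub-possibilities (a factor divisible by \(t\)) through \(P_{a,u}(0)=a^{4}u^{4}\ne 0\). No new Diophantine, \(p\)-adic, or elliptic-curve input is required — that machinery was already expended in the \(4{+}4\) and \(2{+}6\) analyses.
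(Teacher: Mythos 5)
Your proof is correct, and it rests on exactly the same two pillars as the paper's: the exclusion of $4{+}4$ (the Main Result) and of $2{+}6$ (Theorem~\ref{thm:no-2-6}), with everything else reduced to these. The difference is organizational, and it is in your favor. The paper's own proof simply enumerates the patterns $4{+}4$, $2{+}6$, $2{+}2{+}4$, $2{+}2{+}2{+}2$, $3{+}3{+}2$ and declares all splittings covered; taken literally, that list omits two-factor splittings such as $1{+}7$ and $3{+}5$, and irreducible-factor patterns involving linear factors (e.g.\ $1{+}1{+}6$, $1{+}1{+}3{+}3$), which are only implicitly handled by the evenness of $P_{a,u}$. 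Your argument via a minimal-degree monic irreducible factor $R$ with $d\in\{1,2,3,4\}$, combined with the normalized conjugate $R^{\sigma}(t)=(-1)^{d}R(-t)$ and the observation $P_{a,u}(0)=a^{4}u^{4}\ne 0$, sweeps up all of these uniformly: $d=1$ and $d=3$ force the conjugate pair to assemble into an even quadratic or a sextic divisor (hence $2{+}6$), $d=2$ lands in $2{+}6$ or $4{+}4$ according to whether $R$ is even, and $d=4$ is $4{+}4$ by minimality. Each regrouped cofactor is monic and integral by Gauss's lemma, as you note. So your write-up is not merely a restatement but a tightening of the paper's concluding step; no new arithmetic input is needed, exactly as you anticipated.
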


\begin{proof}
All degree-$8$ splittings are excluded as follows.

(i) The case $4{+}4$ is impossible by Theorem~\ref{thm:equiv} and the analysis of equation \eqref{eq:star} (from Lemma~\ref{lem:types} to Corollary~\ref{cor:coprime} and the subsequent $2$-/$3$-adic split).

(ii) The case $2{+}6$ is excluded in Section~\ref{sec:exclude-26}.

(iii) After (ii), any of the remaining patterns $2{+}2{+}4$, $2{+}2{+}2{+}2$, $3{+}3{+}2$ would regroup to $2{+}6$ by Proposition~\ref{prop:others-to-26}, hence are impossible by (ii).

Therefore no nontrivial factorization in $\mathbb{Z}[t]$ exists. Since $P_{a,u}(t)$ is monic and primitive, irreducibility over $\mathbb{Z}$ follows.
\end{proof}


\section*{Conclusions}
We have shown that for any coprime integers $a\ne u>0$ the even cuboid polynomial $P_{a,u}(t)$ admits no factorization of type $4{+}4$ in $\mathbb{Z}[t]$. The key step is the reduction of a potential factorization to the Diophantine condition $(X^{2}-8\Delta^{2})(X^{2}-9\Delta^{2})=4a^{2}u^{2}X^{2}$, from which, using $2$- and $3$-adic estimates and the lemma $\gcd(X,\Delta)=1$, the absence of integer solutions follows. 
We then closed the genuine $2{+}6$ case via an exact divisor criterion combined with a discriminant obstruction. 
Finally, after excluding $2{+}6$, any remaining patterns ($2{+}2{+}4$, $2{+}2{+}2{+}2$, $3{+}3{+}2$) regroup trivially to $2{+}6$ and are therefore impossible. 
Altogether, $P_{a,u}(t)$ admits no nontrivial factorization in $\mathbb{Z}[t]$, establishing irreducibility in full~\cite{Sharipov2011Cuboids,Sharipov2011Note,GuyUPINT}.


\begingroup
\footnotesize

\endgroup


\end{document}